\documentclass[11pt]{amsart}

\usepackage{amssymb}
\usepackage{microtype}
\usepackage{xurl}
\usepackage[hidelinks]{hyperref}
\newtheorem{theorem}{Theorem}[section]
\newtheorem{proposition}[theorem]{Proposition}
\newtheorem{lemma}[theorem]{Lemma}

\theoremstyle{definition}
\newtheorem{definition}[theorem]{Definition}
\theoremstyle{remark}
\newtheorem{remark}[theorem]{Remark}

\newcommand{\HH}{\mathbb H}
\newcommand{\Om}{\Omega}
\newcommand{\Hess}{\operatorname{Hess}}

\newcommand{\tr}{\operatorname{tr}}

\DeclareMathOperator{\artanh}{artanh}
\DeclareMathOperator{\dist}{dist}

\title[Log-Concavity and Level-Set Horoconvexity of the Eigenfunction]
{Log-Concavity and Level-Set Horoconvexity of the First Eigenfunction
on Horoconvex Domains in the Hyperbolic Plane}

\author{Xianzhe Dai}
\address{Department of Mathematics, University of California, Santa Barbara,
CA 93106, USA}
\email{dai@math.ucsb.edu}

\author{John M. Ennis}
\address{Aigora, Richmond, VA, USA}
\email{john.m.ennis@aigora.com}

\author{Xuan Hien Nguyen}
\address{Department of Mathematics, Iowa State University, Ames, IA 50011, USA}
\email{xhnguyen@iastate.edu}

\author{Guofang Wei}
\address{Department of Mathematics, University of California, Santa Barbara,
CA 93106, USA}
\email{wei@math.ucsb.edu}
\thanks{G. Wei is partially supported by NSF DMS 2403557.}

\date{August 5, 2026}

\subjclass[2020]{Primary 35B35, 35P15; Secondary 53C21, 58J50}

\keywords{First Dirichlet eigenfunction, log-concavity, hyperbolic plane,
horoconvexity, level sets, Killing field, nodal set}

\begin{document}
\begin{abstract}
Let \(\Om\subset\HH^2\) be a bounded smooth horoconvex domain and let
\(\psi_1>0\) be its first Dirichlet eigenfunction. We prove that
\[
  \Hess_{\HH^2}(-\log\psi_1)>0
\]
throughout \(\Om\), with no restriction on the diameter or the first
eigenvalue. The proof is by contradiction. A degenerate Hessian would yield a shifted translation Killing derivative with a singular interior zero. Then the boundary-zero theorem of Grossi and
Provenzano shows that the shifted Killing derivative has exactly two zeros on the boundary. A nodal-domain argument on the surface rules
this out. As an application we prove that every superlevel set of
\(\psi_1\) is horoconvex: every
level curve has geodesic curvature at least \(1\). The Hessian bound makes
the shifted construction available for Killing fields with nonvanishing
rotation part, and yields the pointwise inequality
\(|(\Hess u)^{-1}J\nabla u|\le1\) for \(u=-\log\psi_1\), where \(J\) is
rotation by \(\pi/2\); a boundary-zero count for translation fields with
arbitrary axis completes the argument.
\end{abstract}

\maketitle

\section{Introduction}

Let \(\Om\) be a bounded smooth domain in the hyperbolic plane
\(\HH^2\). Its first Dirichlet eigenfunction is the
positive solution, unique up to scaling, of
\begin{equation}\label{eq:eigenfunction}
  -\Delta\psi_1=\lambda_1\psi_1\quad\text{in }\Om,
  \qquad
  \psi_1=0\quad\text{on }\partial\Om.
\end{equation}
The function \(\psi_1\) is log-concave when
\(\Hess\log\psi_1\leq0\), or equivalently when
\(\Hess(-\log\psi_1)\geq0\).

Brascamp and Lieb established log-concavity of the first Dirichlet eigenfunction on convex Euclidean domains \cite{BrascampLieb}, and Lee and Wang proved the analogous result for convex domains on the sphere \cite{LeeWang}. In hyperbolic space, however, negative curvature leads to a different picture: Shih constructed a geodesically convex domain in $\mathbb H^2$ whose first eigenfunction fails to be log-concave \cite{Shih}. This motivates imposing a stronger geometric condition on the domain, namely horoconvexity.

\begin{definition}\label{def:horoconvex}
A \emph{horoball} in \(\HH^2\) is a closed sublevel set \(\{b\leq c\}\) of a
Busemann function \(b\); its boundary \(\{b=c\}\) is a \emph{horocycle}. A
bounded smooth domain \(\Om\subset\HH^2\) is \emph{horoconvex} if its closure
is an intersection of horoballs.
\end{definition}

Two reformulations are used below \cite{Currier, BorisenkoMiquel}. Every point \(x\) of the boundary of a
horoconvex domain carries a \emph{supporting horoball}: a horoball
\(H\supset\overline\Om\) with \(x\in\partial H\). For a smooth boundary,
horoconvexity is equivalent to the inward geodesic-curvature inequality
\(k_g\geq1\).

Horoconvexity is a more fruitful notion of convexity in hyperbolic space, as evidenced by the following results. 
Grossi and Provenzano proved that the first eigenfunction of every smooth
horoconvex domain in \(\HH^2\) has a unique nondegenerate critical point
\cite{GrossiProvenzano}.  Khan, Saha, and Tuerkoen obtained a small-diameter
concavity estimate with respect to a conformally changed connection, but not the original one
\cite{KhanSahaTuerkoen}. 
Wei and Xiao proved the stronger super
log-concavity inequality for sufficiently small horoconvex domains in
\(\HH^n\) \cite{WeiXiao}. Their diameter
restriction is necessary for super log-concavity, even for balls in higher
dimension. Log-concavity is weaker and remains compatible with the large-diameter regime. Although recent work establishes large-diameter fundamental-gap estimates, it does not show log-concavity of the first eigenfunction itself \cite{DaiEnnisNguyenWei,KhanTuerkoen}.

Our first main result removes the diameter restriction for log-concavity
in dimension two. To the best of our knowledge, it is the first log-concavity theorem for all bounded smooth horoconvex domains in
\(\HH^2\) without a diameter restriction.

\begin{theorem}\label{thm:main}
Let \(\Om\subset\HH^2\) be a bounded smooth horoconvex domain. If \(\psi_1\)
is its first Dirichlet eigenfunction, then
\[
  \Hess(-\log\psi_1)>0
  \qquad\text{throughout }\Om.
\]
\end{theorem}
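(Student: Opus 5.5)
We argue by contradiction, combining a continuity (deformation) argument with a nodal-domain count for derivatives of \(\psi_1\) along Killing fields, as the abstract indicates. Set \(u=-\log\psi_1\). Near \(\partial\Om\) we have \(u\to+\infty\), and the boundary behaviour of \(\psi_1\) together with \(k_g\ge1>0\) gives \(\Hess u>0\) in a collar of \(\partial\Om\). So if \(\Hess u\) fails to be positive definite somewhere, there is an interior point \(p\) and a unit vector \(e\) with \(\Hess u(p)(e,e)\le0\). We first reduce to the degenerate case \(\Hess u(p)(e,e)=0\) with \(\Hess u\ge0\) nearby. We would do this by deforming \(\Om\) through horoconvex domains, for instance by shrinking to a small geodesic disk. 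Small horoconvex domains are strictly log-concave by the Wei--Xiao estimate \cite{WeiXiao}, and the first eigenfunction depends continuously (in \(C^2_{\mathrm{loc}}\), uniformly near the boundary) on the domain. Hence a first domain along the path where strict positivity fails has a minimal eigenvalue \(0\) of \(\Hess u\) at some interior point \(p\), with kernel spanned by \(e\).

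Next we construct the shifted translation Killing derivative. Let \(X\) be the hyperbolic translation Killing field whose axis passes through \(p\) in the direction \(e\), so that \(X(p)=e\) and \(\nabla X(p)=0\). Since \(\Delta\) commutes with isometries, \(w_0=X\psi_1\) solves \(-\Delta w=\lambda_1 w\). We add a multiple of \(\psi_1\) and, if needed, a rotation-type correction to obtain \(w=X\psi_1+c\,\psi_1\), with \(c=X u(p)\), which still solves the eigen-equation. The constant is chosen so that \(w(p)=0\). We then expand
\[
\nabla w(p)=-\psi_1\bigl(\nabla(Xu)(p)-c\nabla u(p)\bigr)+\cdots,
\]
and use \(\nabla X(p)=0\) and \(\nabla_e\nabla u(p)=\Hess u(p)e=0\). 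This should show that \(\nabla w(p)=0\), so \(p\) is a singular zero of \(w\). By the local structure of eigenfunctions in dimension two, at least two nodal arcs of \(w\) cross at \(p\), so the nodal set \(\{w=0\}\) has at least four arcs emanating from \(p\). Because \(\{w=0\}\) contains no closed loops in \(\Om\) (each loop would bound a subdomain with eigenvalue \(\lambda_1\), violating domain monotonicity), these arcs must reach \(\partial\Om\) at at least four distinct points, or at least at more boundary zeros than are available.

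On \(\partial\Om\) we have \(\psi_1=0\), so \(w=X\psi_1=\langle X,\nu\rangle\partial_\nu\psi_1\), and its boundary zeros are exactly where the translation field is tangent to \(\partial\Om\). Here we invoke the boundary-zero theorem of Grossi and Provenzano \cite{GrossiProvenzano}: for horoconvex \(\partial\Om\) (\(k_g\ge1\)), a translation Killing field is tangent to \(\partial\Om\) at exactly two points, with transversal sign changes. So \(w|_{\partial\Om}\) has exactly two zeros and \(\partial\Om\) splits into one arc where \(w>0\) and one where \(w<0\). The four nodal arcs from \(p\) end at boundary zeros of \(w\), hence at these two points. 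They therefore create at least three nodal domains of \(w\), at least one of which has boundary contained in \(\{w=0\}\cup\) a single sign arc in a way that forces a nodal domain of \(w\) whose closure meets \(\partial\Om\) only at points where \(w=0\). On that nodal domain \(w\) is a Dirichlet eigenfunction with eigenvalue \(\lambda_1\) on a proper subdomain, contradicting strict monotonicity of \(\lambda_1\). This gives \(\Hess u>0\).

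The main obstacle is the second step: arranging the shift so that \(w\) vanishes to second order at \(p\). A pure translation field with \(\nabla X(p)=0\) may leave a residual first-order term from \(\nabla u(p)\otimes\) (the Killing derivative of \(X\) along the geodesic). We would first try choosing the axis through \(p\) along \(e\) and checking \(\nabla w(p)=0\) directly. If that fails, we would allow a general translation field with axis at a tunable distance, using the free parameter to cancel the residual term. A secondary issue is the nodal-topology counting near the two boundary zeros, where \(\partial_\nu\psi_1<0\) (Hopf) guarantees that the boundary zeros are simple.
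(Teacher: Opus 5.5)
Your core argument is the paper's: at a degenerate point \(p\) with kernel vector \(e\), take the translation field \(X\) whose axis passes through \(p\) in direction \(e\) (so \(X(p)=e\), \(\nabla X|_p=0\)), shift to \(w=X\psi_1+Xu(p)\,\psi_1\), get exactly two boundary zeros from Grossi--Provenzano, and contradict the singular zero at \(p\) with a nodal argument.

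The step you call the main obstacle is in fact immediate. Since \(X\psi_1=-\psi_1Xu\), you have \(w=-\psi_1\bigl(Xu-Xu(p)\bigr)\), hence
\[
  \nabla w=\psi_1\bigl(Xu-Xu(p)\bigr)\nabla u-\psi_1\nabla(Xu),
  \qquad
  \nabla(Xu)=\Hess u(X,\cdot)+\langle\nabla_{\cdot}X,\nabla u\rangle ,
\]
and every term vanishes at \(p\). In your displayed expansion the \(c\,\nabla u(p)\) term has the wrong sign. With the correct sign it cancels the \(Xu(p)\nabla u(p)\) term coming from \(\nabla(X\psi_1)\), which is exactly why \(c=Xu(p)\) is chosen, so there is no residual. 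No rotation correction or off-axis field is needed. Either one could take you outside the class of fields (axis through a point of \(\Om\)) for which you are quoting the Grossi--Provenzano count.

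The deformation through horoconvex domains, with Wei--Xiao at the small end, is superfluous. As written it is also the least justified part of the proposal: you would need a smooth horoconvex path, \(C^2\) control of \(u\) up to the boundary along it, and strict positivity on the initial small domain. Your own collar estimate already suffices. The smallest eigenvalue of \(\Hess u\) is continuous on the connected set \(\Om\) and positive near \(\partial\Om\), so if it is \(\le0\) somewhere it vanishes somewhere. A kernel vector at that point is all the shift uses; ``\(\Hess u\ge0\) nearby'' is never needed.

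The paper's route is shorter still. It rules out degenerate points everywhere, so \(\det\Hess u\) has constant sign. The determinant is positive at the maximum of \(\psi_1\), where \(\Hess u\ge0\) and is nonsingular. Then \(\tr\Hess u=\lambda_1+|\nabla u|^2>0\) gives \(\Hess u>0\), with no boundary asymptotics at all.

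Your nodal step differs from the paper's. You trace the at least four branches from \(p\) to the two boundary zeros and argue by pigeonhole. This can be made to work, but say cleanly why. Two branches ending at the same boundary zero \(q\) bound a region of \(\Om\) meeting \(\partial\Om\) only at \(q\). That region contains a nodal domain with \(w=0\) on its entire boundary, which contradicts strict monotonicity. Your phrase about a boundary ``contained in \(\{w=0\}\cup\) a single sign arc'' is not what produces the contradiction. This route also needs the global graph structure of the nodal set up to \(\partial\Om\).

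The paper avoids tracing nodal lines altogether:
\begin{itemize}
\item monotonicity makes every nodal domain reach an open boundary arc;
\item Green's identity \(\int_{\partial\Om}w\,\partial_\nu\psi_1\,ds=0\), together with Hopf, forces one positive and one negative boundary arc, so there are exactly two nodal domains;
\item a Jordan curve through \(p\) inside \(\{w>0\}\) then separates two negative sectors.
\end{itemize}
Finally, Hopf does not make the boundary zeros simple. It only identifies them with the tangency points of \(X\) with \(\partial\Om\).
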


Our second result concerns the level sets of \(\psi_1\). Log-concavity makes
\(\psi_1\) quasiconcave, so Theorem \ref{thm:main} already implies that
every superlevel set \(\{\psi_1>t\}\) is geodesically convex. The super
log-concavity inequality of Wei and Xiao is a tensor inequality whose
tangential part says precisely that the superlevel sets are horoconvex; as
recalled above, their theorem requires a diameter restriction, and the
restriction is necessary for the full tensor inequality. The tangential
conclusion does not need it.

\begin{theorem}\label{thm:level}
Let \(\Om\subset\HH^2\) be a bounded smooth horoconvex domain. If \(\psi_1\)
is its first Dirichlet eigenfunction, then for every
\(t\in(0,\max\psi_1)\) the level curve \(\{\psi_1=t\}\) is a smooth closed
curve of geodesic curvature at least \(1\), and every superlevel set
\(\{\psi_1>t\}\) is horoconvex.
\end{theorem}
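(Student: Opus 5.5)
Theorem \ref{thm:level} follows from Theorem \ref{thm:main} together with a pointwise inequality for \(u=-\log\psi_1\). The curvature of a level set can be read off the Hessian. At a regular point of \(u\), let \(\nu=\nabla u/|\nabla u|\) and let \(T=J\nu\) be the unit tangent to the level curve. The inward geodesic curvature of \(\{u=s\}\), measured as the boundary of the sublevel set \(\{u<s\}=\{\psi_1>e^{-s}\}\), is
\[
  k=\frac{\Hess u(T,T)}{|\nabla u|}.
\]
So the goal is the inequality \(\Hess u(J\nabla u,J\nabla u)\ge|\nabla u|^3\).

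First, Theorem \ref{thm:main} gives \(\Hess u>0\) in \(\Om\). Since \(u\to\infty\) at \(\partial\Om\), \(u\) is strictly convex and proper. By the Grossi--Provenzano result \cite{GrossiProvenzano} it has a unique critical point. Hence each level set \(\{\psi_1=t\}\) with \(0<t<\max\psi_1\) is a smooth closed convex curve bounding a geodesically convex disc. This gives smoothness and closedness.

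For the curvature bound I will prove the stronger statement \(|(\Hess u)^{-1}J\nabla u|\le1\) everywhere. Granting it, write \(A=\Hess u\), which is positive definite, and \(w=J\nabla u\). Cauchy--Schwarz in the \(A\)-inner product gives
\[
  |w|^2=\langle A^{-1}w,Aw\rangle\le|A^{-1}w|\,|Aw|\le|Aw|.
\]
That alone is not yet the right quantity. The correct manipulation is
\[
  |w|^2=\langle A^{1/2}A^{-1}w,A^{1/2}w\rangle\le\langle A^{-1}w,w\rangle^{1/2}\langle Aw,w\rangle^{1/2},
\]
together with \(\langle A^{-1}w,w\rangle\le|A^{-1}w|\,|w|\le|w|\). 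Combining these gives \(\langle Aw,w\rangle\ge|w|^3\), which is exactly \(k\ge1\). Horoconvexity of the superlevel sets then follows from the cited equivalence \(k_g\ge1\) \cite{Currier, BorisenkoMiquel}.

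To prove \(|A^{-1}w|\le1\), I will use Killing fields. A Killing field \(X\) of \(\HH^2\) gives \(X\psi_1\), which solves the eigenvalue equation on \(\Om\). Along the level curve through \(p\), the point \(q=\exp_p(A^{-1}w)\) is the candidate center of a rotation or horocyclic field whose shifted derivative \(Xu-c\) (the shift uses the first-order data of \(u\)) vanishes to second order at \(p\). The construction is the following.
\begin{enumerate}
\item Suppose \(|A^{-1}w(p)|>1\) at some point \(p\). Then there is a Killing field \(X\), with nonvanishing rotation part, such that \(X u\) has a degenerate critical point at \(p\). Because \(\Hess u>0\), the linear system determining \(X\) is solvable. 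The shifted function \(\phi=X\psi_1+c\,\psi_1\) then has a singular zero at \(p\), so its nodal set has at least four arcs emanating from \(p\).
\item As \(|A^{-1}w|\) varies continuously, this forces the existence of a translation (hyperbolic-type) Killing field, with some axis, that has the same property.
\item Next I count the zeros of \(\phi\) on \(\partial\Om\). The aim is to extend the Grossi--Provenzano boundary-zero argument to translation fields with arbitrary axis: on a curve with \(k_g\ge1\), \(\phi=\partial_\nu\psi_1\,\langle X,\nu\rangle\) has exactly two sign changes.
\item Finally, the nodal-domain count gives the contradiction. Four nodal arcs from the interior point \(p\) either produce a nodal domain not touching \(\partial\Om\), which is impossible because \(\phi\) would then be a Dirichlet eigenfunction with eigenvalue \(\lambda_1\) on a proper subdomain, or they produce at least four boundary sign changes.
\end{enumerate}

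The main obstacle is step (iii), the boundary count for general axes. There \(\langle X,\nu\rangle\) must change sign exactly twice. I would prove it by using the supporting horoballs: the set where a translation field points outward is bounded by a geodesic or hypercycle, and a curve with \(k_g\ge1\) meets such a curve at most twice.
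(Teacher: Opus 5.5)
Your architecture matches the paper's: strict convexity from Theorem \ref{thm:main}, the bound $|(\Hess u)^{-1}J\nabla u|\le1$ proved by contradiction from a shifted Killing derivative, a boundary-zero count for translation fields with arbitrary axis, and the nodal argument. Your Cauchy--Schwarz derivation of $\langle Aw,w\rangle\ge|w|^3$ is a correct coordinate-free replacement for the paper's $2\times2$ computation.

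The genuine gap is step (ii). That is where the hypothesis $|A^{-1}w(p)|>1$ must be used, and it is not. Write $\nabla_YX=\omega JY$. The shifted derivative vanishes to second order at $p$ exactly when $\Hess u|_p\,X(p)=\omega(p)\,J\nabla u(p)$. Since $X\mapsto(X(p),\omega(p))$ is a bijection, the solutions form a single line, spanned by the field with $\omega(p)=1$ and $X(p)=A^{-1}w(p)$, and this line exists at \emph{every} $p$. So there is no family to deform, and continuity of $|A^{-1}w|$ produces nothing.

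The missing idea is the invariant that decides the type of this particular field. The Killing identity $\nabla^2X=-R(X,\cdot)\cdot$ in curvature $-1$ gives $d\omega=-\langle JX,\cdot\rangle$, so $|X|^2-\omega^2$ is constant on $\HH^2$. It is negative for rotations (evaluate at the center) and zero for parabolic fields, so a positive value forces a translation. For your field it equals $|A^{-1}w(p)|^2-1$. Hence $|A^{-1}w(p)|>1$ says exactly that $X$ \emph{itself} is a translation field, with axis at distance $\artanh(1/|X(p)|)$ from $p$, possibly outside $\overline\Om$.

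This step cannot be bypassed. Where $|A^{-1}w|<1$, the same construction gives a rotation field whose shifted derivative also vanishes to second order at $p$; for a geodesic ball it is the rotation about the center. There is no contradiction in that case. Your picture of a rotation or horocyclic center at $\exp_p(A^{-1}w)$ is also off, since a rotation center lies in the direction $\pm JX(p)$.

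Step (iii) is only sketched, and the mechanism you state is imprecise. The sign of $\langle X,\nu\rangle$ changes at tangencies of $\partial\Om$ with the orbits (geodesic or equidistant curves), and an intersection bound does not by itself bound tangencies. What works is the following.
\begin{enumerate}
\item Let $s$ be the signed distance to the axis. The zeros of $X(\psi_1)$ on $\partial\Om$ are exactly the critical points of $s|_{\partial\Om}$.
\item These critical points are nondegenerate, because $\Hess s=\tanh s\,(g-ds\otimes ds)$ and $k_g\ge1>|\tanh s|$. Hence they alternate between maxima and minima.
\item If there were two maxima, a regular level $\{s=c\}$ would meet $\partial\Om$ at least four times.
\end{enumerate}
That contradicts Lemma \ref{lem:two-points}: a geodesic or equidistant curve meets $\partial\Om$ at most twice. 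That lemma is proved by restricting Busemann functions of supporting horoballs to the curve, where every critical point is a strict minimum. Your closing sentence names this intersection bound, but it is the nondegeneracy-plus-intermediate-value step that converts it into exactly two sign changes.
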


The constant \(1\) cannot be improved: on a ball of radius \(r\) the level
curves are concentric circles of geodesic curvature \(\coth\rho\) with
\(\rho<r\), the infimum over the level curves is \(\coth r\), and
\(\coth r\to1\) as \(r\to\infty\).

\paragraph{\it Proof strategy for Theorem \ref{thm:main}}
Set \(u=-\log\psi_1\). The eigenfunction equation gives the following.
\[
  \tr\Hess u=\Delta u=\lambda_1+|\nabla u|^2>0.
\]
At a maximum of \(\psi_1\), the Hessian of \(u\) is nonnegative. It follows
that the theorem will be proved once we know that \(\Hess u\) is never
singular. 
Thus the question is
reduced to ruling out a degenerate point of \(\Hess u\). 

Suppose that \(\Hess u\) is singular at \(p\), and choose a unit kernel
vector \(v\). There is a translation (hyperbolic)  Killing field \(K\) with
\[
  K(p)=v,\qquad \nabla K|_p=0.
\]
The kernel condition makes \(\nabla(K(\psi_1))(p)=0\), but in general
\(K(\psi_1)(p)\neq0\). This is the obstruction to using the obvious Killing
derivative \(K(\psi_1)\). Here it's important to consider the new (shifted) function
\[
  z=-\psi_1\bigl(K(u)-K(u)(p)\bigr)
   =K(\psi_1)+K(u)(p)\psi_1,
\]
which then satisfies \(z(p)=0\) and \(\nabla z(p)=0\), namely a singular zero at $p$. At the same time, \(z\) still solves
the first-eigenvalue equation, and the added term vanishes on
\(\partial\Om\), so \(z\) has the same boundary trace as \(K(\psi_1)\).

This is where the geometric input enters. Grossi and Provenzano proved that
the boundary trace \(K(\psi_1)\) has exactly two zeros for every translation
field of this form
\cite[Proposition 3.15(iii)]{GrossiProvenzano}. A first-eigenvalue solution
with two boundary zeros has only two nodal domains, one positive and one
negative. A singular interior zero, however, has at least four alternating
sectors. In a disk, two sectors of one sign can be joined to form a Jordan
curve that separates two sectors of the other sign. This contradicts the
connectedness of that nodal domain.
\newline

\paragraph{\it Proof strategy for Theorem \ref{thm:level}.}
It is crucial to consider a second (much larger) family of Killing fields. 
The translation fields in \cite{GrossiProvenzano} always have axis passing 
through the domain while we have to make use of those having axes in general position, 
possibly disjoint from \(\overline\Om\).
Section \ref{sec:count} proves the count for
arbitrary axes; the novel geometric ingredient is that a complete curve of constant
geodesic curvature less than \(1\) meets \(\partial\Om\) in at most two
points (Lemma \ref{lem:two-points}).

Write \(J\) for
rotation by \(\pi/2\) in the tangent planes. Every Killing field \(K\) on
\(\HH^2\) satisfies \(\nabla_XK=\omega JX\) for a scalar function \(\omega\),
and for \(p\in\Om\) the shifted derivative
\(z=K(\psi_1)+K(u)(p)\,\psi_1\) has a singular zero at \(p\) exactly when
\[
  \Hess u|_p\,K(p)=\omega(p)\,J\nabla u(p).
\]
Theorem \ref{thm:main} makes \(\Hess u\)
invertible, so the equation can instead be solved at every regular point,
with \(\omega(p)=1\) and \(K(p)=(\Hess u)^{-1}J\nabla u(p)\). The quantity
\(|K|^2-\omega^2\) is constant along every Killing field and determines its
type, and this \(K\) is a translation field precisely when
\(|(\Hess u)^{-1}J\nabla u|(p)>1\). For a translation field, the
boundary-zero count and the nodal argument forbid the singular zero. The
conclusion is the pointwise inequality
\begin{equation}\label{eq:star}
  \bigl|(\Hess u)^{-1}J\nabla u\bigr|\;\le\;1
  \qquad\text{on }\Om,
\end{equation}
and a two-by-two computation converts \eqref{eq:star} into the curvature
bound of Theorem \ref{thm:level}.

Sections 2 and 3 construct \(z\) and verify its boundary behavior. Section 4
proves the nodal statement, and Section 5 assembles these facts and proves Theorem~\ref{thm:main}.
Section \ref{sec:killing2} develops the structure of general Killing fields,
Section \ref{sec:count} proves the boundary-zero count for arbitrary axes,
and Section \ref{sec:level} completes the proof of Theorem \ref{thm:level}.

\begin{remark}
The proofs use two facts special to surfaces: a singular nodal point has
alternating sectors that can be separated by a Jordan curve, and a positive
determinant together with a positive trace settles definiteness for a
two-by-two Hessian. The Grossi--Provenzano boundary count is also
two-dimensional, as are the rotation scalar \(\omega\) and the tangency
count of Section \ref{sec:count}. We make no claim for \(\HH^n\),
\(n\geq3\), for either theorem.
\end{remark}

\section{Translation Killing fields and boundary zeros}

We first collect the geometric facts that let us choose the right Killing
field, differentiate the eigenfunction along it, and control the resulting
boundary trace.

\begin{lemma}\label{lem:translation}
Let \(p\in\HH^2\) and let \(v\in T_p\HH^2\) be a unit vector. There is a
Killing field \(K_v\) generating translation along the geodesic through \(p\)
with initial velocity \(v\), normalized so that
\[
  K_v(p)=v,
  \qquad
  \nabla K_v|_p=0.
\]
\end{lemma}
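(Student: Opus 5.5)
The plan is to write the Killing field down explicitly in a model and check the two normalizations directly. I will use the hyperboloid model \(\HH^2=\{x\in\mathbb R^{2,1}:\langle x,x\rangle=-1,\ x_0>0\}\) with the Minkowski form \(\langle\cdot,\cdot\rangle\). The isometry group is \(SO^+(2,1)\). The Killing fields are the restrictions of the linear fields \(x\mapsto Ax\), where \(A\) is skew with respect to \(\langle\cdot,\cdot\rangle\). The geodesic through \(p\) with unit initial velocity \(v\) is \(\gamma(t)=\cosh t\,p+\sinh t\,v\); here \(\langle p,v\rangle=0\) and \(\langle v,v\rangle=1\). Its one-parameter family of boosts is \(\Phi_t=\exp(tA)\), where
\[
  Ax=\langle x,p\rangle\,(-v)\;+\;\langle x,v\rangle\,p ,
\]
up to a sign convention I will fix so that \(Ap=v\). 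One checks that this \(A\) is \(\langle\cdot,\cdot\rangle\)-skew, since \(\langle Ax,y\rangle=-\langle x,p\rangle\langle v,y\rangle+\langle x,v\rangle\langle p,y\rangle\) is antisymmetric in \(x,y\). Moreover \(A\) kills the orthogonal complement of \(\operatorname{span}\{p,v\}\) and maps \(p\mapsto v\), \(v\mapsto p\). So \(\Phi_t\) fixes the plane containing the geodesic pointwise in the normal direction, and translates along \(\gamma\). I then set \(K_v(x)=Ax\) for \(x\in\HH^2\), which gives \(K_v(p)=Ap=v\).

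The second step is the covariant derivative. On the hyperboloid the Levi-Civita connection is the tangential projection of the ambient derivative, \(\nabla_XK=DK(X)+\langle DK(X),x\rangle x\). Because \(K\) is linear, \(DK(X)=AX\). At \(p\) the tangent space is spanned by \(v\) and a unit \(w\perp p,v\). Here \(Av=p\) is purely normal, so its tangential part vanishes. Also \(Aw=0\). Hence \(\nabla K_v|_p=0\). As an alternative, intrinsic check I would note the following. For a Killing field, \(\nabla K\) is skew, so in two dimensions \(\nabla_X K=\omega JX\). The rotation scalar \(\omega(p)\) vanishes because the flow \(\Phi_t\) fixes \(p\)'s geodesic and the reflection across it commutes with \(\Phi_t\). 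That reflection therefore preserves \(K_v\) and reverses orientation, so it forces \(\omega(p)=-\omega(p)\).

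The main obstacle is only bookkeeping: fixing the sign in \(A\) so that \(Ap=v\) rather than \(-v\), and confirming the normal/tangential splitting at \(p\). Neither is substantive. I would also record, for later use, that \(|K_v|^2-\omega^2\equiv1\), which identifies \(K_v\) as a translation (hyperbolic) field. This holds because the quantity is constant along any Killing field and equals \(1-0\) at \(p\).
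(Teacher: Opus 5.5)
Your proof is correct, but it reaches the conclusion by a different route from the paper. The paper argues intrinsically. The integral curve of \(K_v\) through \(p\) is the unit-speed geodesic \(\gamma\), so \(\nabla_{K_v}K_v=0\) at \(p\). On a surface the skew endomorphism \(\nabla K_v|_p\) equals \(\omega J\), so the single equation \(\omega Jv=0\) forces \(\omega=0\).

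You instead realize \(K_v\) as the linear field \(x\mapsto Ax\) on the hyperboloid and compute \(\nabla K_v|_p\) by tangential projection: \(Av=p\) is normal to \(\HH^2\) at \(p\), and \(Aw=0\). Your route buys two things:
\begin{itemize}
\item It actually constructs the field and checks that it is the unit-speed translation along \(\gamma\); the paper treats this as standard. You should state this check explicitly: \(A\) swaps \(p\) and \(v\), so \(e^{tA}p=\cosh t\,p+\sinh t\,v=\gamma(t)\).
\item It never uses the two-dimensional reduction \(\nabla K=\omega J\), so it proves \(\nabla K_v|_p=0\) verbatim in \(\HH^n\). There, the paper's argument (only \(\nabla_{K_v}K_v=0\) plus skewness) would not by itself exclude a rotational component about the axis.
\end{itemize}

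The paper's route is shorter and model-free. It is also phrased in the \(\omega\)-formalism that is reused in Section~\ref{sec:killing2} and invoked in Remark~\ref{rem:other-space-forms}.

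Your reflection argument is a valid third proof, and it even gives \(\omega=0\) along the whole axis. One wording fix: the phrase ``fixes the plane containing the geodesic pointwise in the normal direction'' should say that \(e^{tA}\) preserves \(\operatorname{span}\{p,v\}\), acting there as a boost, and fixes \(w\).
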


\begin{proof}
Let \(\gamma\) be the geodesic with
\(\gamma(0)=p\) and \(\gamma'(0)=v\), and let \(K_v\) generate the
one-parameter group of hyperbolic translations along \(\gamma\), normalized
to have unit length on \(\gamma\). Thus \(K_v(p)=v\) and
\(\nabla_{K_v}K_v=0\) on the axis.

On an oriented surface, \(\nabla K_v\) is skew-symmetric, so at \(p\) it has
the form
\[
  \nabla_XK_v=\omega JX
\]
for a scalar \(\omega\), where \(J\) denotes rotation by \(\pi/2\).
Since
\(\nabla_{K_v}K_v(p)=\omega Jv=0\), one has \(\omega=0\), hence
\(\nabla K_v|_p=0\).
\end{proof}

The next fact keeps the Killing derivative in the same eigenspace.

\begin{lemma}\label{lem:commutator}
If \(K\) is a Killing field on \(\HH^2\), then
\[
  \Delta(Kf)=K(\Delta f)
\]
for every smooth function \(f\). Consequently, if \(f\) solves
\((\Delta+\lambda)f=0\), then \(Kf\) solves the same equation.
\end{lemma}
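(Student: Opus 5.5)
The idea is that \(K\) generates a one-parameter group \(\phi_t\) of isometries, and the Laplacian commutes with isometries. Fix a smooth \(f\) on an open set \(U\subset\HH^2\) and a point \(x\in U\). For \(|t|\) small, \(\phi_t(x)\) stays in \(U\). Since \(\phi_t\) is an isometry,
\[
  \Delta(f\circ\phi_t)=(\Delta f)\circ\phi_t
\]
near \(x\). Because \(K\) is complete on \(\HH^2\), the family \(\phi_t\) is a smooth flow. Hence \((t,y)\mapsto f(\phi_t(y))\) is smooth, and we may differentiate in \(t\) at \(t=0\) and commute \(\partial_t\) with the second-order spatial operator \(\Delta\). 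Using \(\partial_t|_{t=0}(g\circ\phi_t)=Kg\) on both sides gives \(\Delta(Kf)=K(\Delta f)\).

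As a cross-check that does not use the flow, one can compute directly. Write \(Kf=\langle\nabla f,K\rangle\) and expand:
\[
  \Delta\langle\nabla f,K\rangle
  =\langle\nabla\Delta f,K\rangle+\operatorname{Ric}(\nabla f,K)
  +2\langle\Hess f,\nabla K\rangle+\langle\nabla f,\Delta K\rangle .
\]
Here the Bochner commutation \(\tr\nabla^2\nabla f=\nabla\Delta f+\operatorname{Ric}(\nabla f)\) handles the first two terms. The term \(\langle\Hess f,\nabla K\rangle\) vanishes because \(\Hess f\) is symmetric and \(\nabla K\) is skew. For a Killing field, \(\Delta K=-\operatorname{Ric}(K)\), so the Ricci terms cancel. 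I would give the flow argument as the proof and mention this identity only as a remark.

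The consequence is then immediate. If \((\Delta+\lambda)f=0\), then
\[
  \Delta(Kf)+\lambda Kf=K(\Delta f+\lambda f)=0,
\]
since \(K\) is a linear first-order operator and \(\lambda\) is constant.

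There is no real obstacle. The only point to state carefully is that the identity is local, so it holds on \(\Om\) for \(f=\psi_1\), which is smooth up to the boundary. Completeness of \(K\), or simply working locally in \(t\), justifies differentiating under the flow.
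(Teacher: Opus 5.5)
Your proposal is correct and follows the paper's proof: the flow of \(K\) consists of isometries, the Laplacian is invariant under them, and differentiating \(\Delta(f\circ\phi_t)=(\Delta f)\circ\phi_t\) at \(t=0\) gives the commutation identity, from which the eigenfunction statement follows by linearity. Your direct Bochner-type cross-check is also correct, since the symmetric Hessian pairs to zero with the skew \(\nabla K\) and \(\Delta K=-\operatorname{Ric}(K)\) cancels the Ricci term, but the paper does not use it.
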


\begin{proof}
The flow of \(K\) consists of isometries. The Laplace--Beltrami operator is
invariant under isometries, and differentiating this invariance at time zero
gives the commutator identity.
\end{proof}

The shift introduced in Section 3 vanishes on the boundary. Its boundary
behavior is therefore inherited from \(K(\psi_1)\). The following result,
stated in the form used here, is the only place where horoconvexity enters
the proof.

\begin{proposition}[Grossi--Provenzano \cite{GrossiProvenzano}]\label{prop:GP}
Let \(\Om\subset\HH^2\) be bounded, smooth, and horoconvex. Let
\(p\in\Om\), let \(v\in T_p\HH^2\) be a unit vector, and let \(K\) be the
translation Killing field whose axis is the geodesic through \(p\) tangent
to \(v\). If \(\psi\in C^1(\overline\Om)\) satisfies
\(\psi>0\) in \(\Om\), \(\psi=0\) on \(\partial\Om\), and
\(\nabla\psi\neq0\) on \(\partial\Om\), then \(K(\psi)\) has exactly two
zeros on \(\partial\Om\).
\end{proposition}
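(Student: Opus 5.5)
We reduce the count of zeros of \(K(\psi)\) on \(\partial\Om\) to a tangency count between the curve \(\partial\Om\) and the orbits of \(K\). On \(\partial\Om\) we have \(\psi=0\) and \(\nabla\psi\neq0\), so \(\nabla\psi=-|\nabla\psi|\,\nu\), where \(\nu\) is the outward unit normal. Hence
\[
  K(\psi)=\langle K,\nabla\psi\rangle=-|\nabla\psi|\,\langle K,\nu\rangle
  \qquad\text{on }\partial\Om.
\]
The zeros of \(K(\psi)\) on \(\partial\Om\) are therefore exactly the points where \(K\) is tangent to \(\partial\Om\). This reduces the statement to a purely geometric one, independent of \(\psi\). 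Work in the upper half-plane with the axis of \(K\) equal to the positive imaginary axis, so that \(K=x\partial_x+y\partial_y\) is the dilation field. Its orbits are the Euclidean rays from the origin, which are the equidistant curves to the axis. These are curves of constant geodesic curvature \(\tanh d<1\), where \(d\) is the distance to the axis, together with the axis itself.

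\emph{Existence of at least two zeros.} The axis passes through \(p\in\Om\) and \(\overline\Om\) is compact, so \(\partial\Om\) lies in the region \(\{x^2+y^2\in[r_1,r_2]\}\) with \(0<r_1<r_2\). The radial function \(x^2+y^2\), restricted to the closed curve \(\partial\Om\), attains its maximum and minimum at distinct points, and \(K\) is tangent to \(\partial\Om\) at both. This gives at least two zeros. Equivalently, consider the angular coordinate \(\theta=\arg(x+iy)\in(0,\pi)\), which is a first integral of \(K\). Restricted to \(\partial\Om\), its extrema are exactly the tangency points, since \(K\) spans \(\ker d\theta\). This second description is the one I would use: \(\theta|_{\partial\Om}\) has at least a maximum and a minimum, because its image is a nondegenerate interval (\(\Om\) is open).

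\emph{At most two zeros.} This is the main step. Each orbit \(\{\theta=c\}\) is a complete curve of constant geodesic curvature less than \(1\). I would prove that such a curve meets \(\partial\Om\) in at most two points, and that at a tangency it touches \(\partial\Om\) only at that single point. For the first claim, use horoconvexity: \(\overline\Om\) is an intersection of horoballs, and an equidistant curve with curvature below \(1\) crosses every horocycle in at most two points. A comparison argument is cleaner. Suppose the orbit met \(\partial\Om\) in three points \(a,b,c\) in order along the orbit. Take the supporting horoball \(H\) at the boundary point where the orbit segment between consecutive intersections exits farthest, or argue via the convexity of \(\Om\cap\{\text{orbit}\}\). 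The intersection of a horoball with a curve of curvature less than \(1\) is connected, because the Busemann function restricted to such a curve is strictly convex: its second derivative along the curve is at least \(1-k_g^2>0\)-type, using \(\Hess b=g-db\otimes db\). Therefore \(\overline\Om\cap\{\theta=c\}\), an intersection of connected arcs of a curve on which each horoball cuts a connected set, is a single arc. A single arc has at most two endpoints, so there are at most two intersection points.

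From this, \(\theta|_{\partial\Om}\) has connected sublevel sets of the orbits, so \(\theta|_{\partial\Om}\) is unimodal. Each level \(c\) in the open range is attained exactly twice, and the extremal levels are attained at a single point each. If there were a third critical point of \(\theta|_{\partial\Om}\), it would be an interior-level tangency. At such a point the level set would have to meet \(\partial\Om\) in a tangent point together with two further transversal points, or the tangency would produce a local extremum and hence a third or fourth crossing of nearby levels. Either way this contradicts the bound of two. Excluding degenerate inflection-type tangencies requires the strict inequality: \(\partial\Om\) has \(k_g\ge1\) while the orbit has curvature \(<1\), so their contact is always of order exactly two. Hence tangency points are nondegenerate extrema, and there are exactly two of them. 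I expect the strict convexity computation for the Busemann function along equidistant curves to be the main obstacle, that is, verifying \((b\circ\gamma)''>0\) for \(k_g(\gamma)<1\). I would do this explicitly in the half-plane model, using a horocycle centered at \(\infty\) (\(b=-\log y\)) after an isometry.
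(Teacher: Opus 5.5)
Your architecture is the paper's: reduce zeros of \(K(\psi)\) to tangencies of \(\partial\Om\) with the equidistant orbits, show via horoballs that each orbit meets \(\partial\Om\) at most twice, get nondegeneracy from \(k_g\ge1>\kappa_0\), and count critical points of a first integral. This is exactly the proof of Lemma~\ref{lem:two-points} and Proposition~\ref{prop:count}, which the paper notes specializes to this statement. For the statement itself the paper only cites Grossi--Provenzano, whose proof instead finds one tangency on each side of the axis.

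However, the step you single out as the main obstacle is false as stated. A Busemann function is not strictly convex along a curve of curvature \(\kappa_0<1\). In your normalization take \(b=-\log y\) and the orbit \(h(\sigma)=e^{\sigma\sin\theta}e^{i\theta}\), with \(\sigma\) hyperbolic arclength. Then \(b\circ h=-\sigma\sin\theta-\log\sin\theta\) is affine. For Busemann functions centered at suitable finite ideal points and \(\kappa_0>0\), \(b\circ h\) is even strictly concave on part of the orbit. In general \((b\circ h)''=1-\langle\nabla b,h'\rangle^2+\langle\nabla b,\nabla_{h'}h'\rangle\), and the last term can cancel the first two, so the computation you plan cannot succeed. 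What is true, and all you need, is the statement at critical points. Where \(\langle\nabla b,h'\rangle=0\), one gets \((b\circ h)''=1+\langle\nabla b,\nabla_{h'}h'\rangle\ge1-\kappa_0>0\). So every critical point of \(b\circ h\) is a strict local minimum, and every sublevel set \(\{b\circ h\le c\}\) is an interval: an interior maximum between two components would be a critical point that is not a strict local minimum. This is how the paper gets connectedness of the intersection of a horoball with \(h\).

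Two further points need repair. First, from \(\overline\Om\cap h\) being a single arc you conclude there are at most two intersection points. That requires \(\partial\Om\cap h\) to lie among the endpoints of the arc, but a priori \(h\) could touch \(\partial\Om\) at a relative interior point \(x\) of the arc. The paper excludes this with the supporting horoball at \(x\): then \(b\circ h\le c\) on the arc with equality at the interior point \(x\), so \(x\) is a local maximum, contradicting the critical-point fact above. Alternatively, \(h\) would be tangent to \(\partial\Om\) at \(x\) from inside, forcing its curvature to be at least \(k_g(x)\ge1\).

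Second, your first existence argument via \(x^2+y^2\) is wrong. \(K=x\partial_x+y\partial_y\) is parallel to the hyperbolic gradient of \(x^2+y^2\), so at its extrema on \(\partial\Om\) the field \(K\) is normal, not tangent. Only the \(\theta\) description is valid.

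With these repairs your final count works. A nondegenerate third critical point of \(\theta|_{\partial\Om}\) forces some nearby level to be attained at least three times, contradicting the two-point bound. This holds whether the extra point sits at an interior level or is a second point at an extremal level. It is equivalent to the paper's choice of a regular value between two strict maxima.
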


\begin{proof}
This is \cite[Proposition 3.15(iii)]{GrossiProvenzano}. The cited proposition
allows arbitrary \(v\in T_p\HH^2\). In particular, it imposes no
orthogonality condition between \(v\) and \(\nabla\psi(p)\).
\end{proof}

\begin{remark}
In the Poincar\'e disk with \(p\) at the origin, the integral curves of \(K\)
are hypercycles. A boundary zero of \(K(\psi)\) is a tangency between
\(\partial\Om\) and one of these hypercycles. Grossi and Provenzano use
horoconvexity to show that there is exactly one tangency on each side of the
axis.
\end{remark}

\section{From degeneracy to a singular zero}

Set
\(
  u=-\log\psi_1
\).
Equation \eqref{eq:eigenfunction} gives
\begin{equation}\label{eq:u}
  \Delta u=\lambda_1+|\nabla u|^2.
\end{equation}

The next lemma is the bridge from the Hessian question to a linear nodal
problem. The calculation is short; the choice of the shift is the main
point.

\begin{lemma}[Shifted Killing derivative]\label{lem:shift}
Suppose that \(\Hess u(p)\) is degenerate at \(p\in\Om\), and let
\(v\in T_p\HH^2\) be a unit vector in its kernel. Let \(K=K_v\) be the
translation field from Lemma \ref{lem:translation}, and define
\begin{equation}\label{eq:shift}
  z:=K(\psi_1)+c_0\psi_1,
  \qquad
  c_0=\langle v,\nabla u(p)\rangle.
\end{equation}
Then
\[
  (\Delta+\lambda_1)z=0,\qquad
  z(p)=0,\qquad
  \nabla z(p)=0,
\]
and
\begin{equation}
\label{eq:boundary-Z}
  z|_{\partial\Om}=K(\psi_1)|_{\partial\Om}.
\end{equation}
\end{lemma}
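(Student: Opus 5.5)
The four claims are checked in turn, each by direct computation; the only nontrivial input is Lemma \ref{lem:translation}.

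\emph{Equation.} Since \((\Delta+\lambda_1)\psi_1=0\), Lemma \ref{lem:commutator} gives \((\Delta+\lambda_1)K(\psi_1)=0\). By linearity, \(z=K(\psi_1)+c_0\psi_1\) solves the same equation.

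\emph{Boundary trace.} Since \(\psi_1=0\) on \(\partial\Om\), the shift term \(c_0\psi_1\) vanishes there. Both \(K(\psi_1)\) and \(\psi_1\) are smooth up to the boundary, by elliptic regularity on the smooth bounded domain. Hence \eqref{eq:boundary-Z} holds.

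\emph{Vanishing at \(p\).} Write \(\psi_1=e^{-u}\). Then
\[
K(\psi_1)=-\psi_1\,K(u),\qquad z=-\psi_1\bigl(K(u)-c_0\bigr).
\]
By Lemma \ref{lem:translation}, \(K(p)=v\), so \(K(u)(p)=\langle v,\nabla u(p)\rangle=c_0\). Therefore \(z(p)=0\).

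\emph{Vanishing gradient at \(p\).} Differentiate the product:
\[
\nabla z=-(K(u)-c_0)\nabla\psi_1-\psi_1\nabla\bigl(K(u)\bigr).
\]
The first term vanishes at \(p\) because \(K(u)(p)=c_0\). For the second, write \(K(u)=\langle K,\nabla u\rangle\) and differentiate in a direction \(X\):
\[
X\langle K,\nabla u\rangle=\langle\nabla_XK,\nabla u\rangle+\Hess u(X,K).
\]
Lemma \ref{lem:translation} gives \(\nabla K|_p=0\), so the first term drops at \(p\). The second term is \(\Hess u(X,v)\), which vanishes because \(v\) lies in the kernel of the symmetric form \(\Hess u(p)\). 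Since \(\psi_1(p)>0\) is finite, \(\nabla z(p)=0\).

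The step most likely to need care is this last one, which relies on both normalizations \(K(p)=v\) and \(\nabla K|_p=0\) from Lemma \ref{lem:translation} together with symmetry of the Hessian. Once those are in hand, the verification is routine.
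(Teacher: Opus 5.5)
Your proposal is correct and follows the paper's proof essentially step for step. The equation comes from Lemma \ref{lem:commutator} plus linearity, the boundary trace from \(\psi_1=0\) on \(\partial\Om\), and the singular zero from the factorization \(z=-\psi_1\bigl(K(u)-K(u)(p)\bigr)\) together with \(K(p)=v\), \(\nabla K|_p=0\), and the kernel condition.
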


\begin{proof}
Lemma \ref{lem:commutator} and \eqref{eq:eigenfunction} show that both
\(K(\psi_1)\) and \(\psi_1\) solve the equation
\((\Delta+\lambda_1)f=0\). Since \(\psi_1=0\) on \(\partial\Om\), the shift
does not change the boundary trace.

Using \(\psi_1=e^{-u}\), we have \(
  K(\psi_1)=-\psi_1K(u),\) \(c_0=K(u)(p), \) therefore
\[
  z=-\psi_1\big[K(u)-K(u)(p)\big].
\]
This proves \(z(p)=0\). If \( h=K(u)-K(u)(p), \)
then
\[
  \nabla h
  =\Hess u(K,\cdot)
   +\langle\nabla_{\,\cdot\,}K,\nabla u\rangle.
\]
At \(p\), Lemma \ref{lem:translation} gives
\(K(p)=v\) and \(\nabla K|_p=0\). The kernel assumption gives
\(\Hess u(v,\cdot)=0\). Hence \(\nabla h(p)=0\), and
\[
  \nabla z(p)
  =-\psi_1(p)\nabla h(p)-h(p)\nabla\psi_1(p)
  =0. \qedhere
\]
\end{proof}

\begin{remark}\label{rem:new-step}
Without the shift, the kernel condition gives
\(\nabla(K (\psi_1))(p)=0\), but it does not give \(K( \psi_1)(p)=0\), since the kernel
direction \(v\) need not be orthogonal to \(\nabla \psi_1(p)\). Choosing a
different Killing field to force this orthogonality could leave the
translation class covered by Proposition \ref{prop:GP}. The term
\(c_0\psi_1\) removes the unwanted value while preserving both the equation
and the boundary trace.
\end{remark}

\begin{remark}
\label{rem:Z-not-zero}
Proposition \ref{prop:GP} and \eqref{eq:boundary-Z} imply that $z$ has only two zeros on the boundary, so it does not vanish identically.
\end{remark}

\section{Two boundary zeros and singular nodal points}

We now show in Proposition~\ref{prop:nodal} that the boundary-zero count rules out the singular zero
constructed above. Its proof is implicitly contained in the proof of \cite[Proposition 3.5]{GrossiProvenzano}. The key elements are the domain monotonicity of the eigenvalue and the local description of the nodal line  on surfaces for the solution of \eqref{eq:Z} \cite{Cheng}. We give a more detailed proof for our case.

\begin{proposition}\label{prop:nodal}
Let \(\Om\) be a bounded smooth simply connected domain in a smooth
Riemannian surface. Let \(\psi>0\) be a first Dirichlet eigenfunction:
\begin{equation}
  (\Delta+\lambda_1(\Om))\psi=0
  \quad\text{in }\Om,
  \qquad
  \psi=0
  \quad\text{on }\partial\Om. \label{eq:eigen function psi}
\end{equation}
Suppose that \(z\in C^2(\Om)\cap C^1(\overline\Om)\), \(z\not\equiv0\),
solves
\begin{equation}
  (\Delta+\lambda_1(\Om))z=0
  \quad\text{in }\Om  \label{eq:Z}
\end{equation}
and has exactly two zeros on \(\partial\Om\). Then \(z\) has no singular
zero in \(\Om\).
\end{proposition}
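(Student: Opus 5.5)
We argue by contradiction. Suppose \(z\) has a singular zero \(p\in\Om\), that is, \(z(p)=0\) and \(\nabla z(p)=0\). The plan has three steps: describe the nodal set of \(z\), use domain monotonicity to count nodal domains, and then use planar topology to reach a contradiction.

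\emph{Step 1: local structure.} By the local structure theorem for solutions of \eqref{eq:Z} on surfaces \cite{Cheng}, near a zero of order \(N\ge2\) the nodal set of \(z\) consists of \(N\) smooth arcs through \(p\) meeting at equal angles \(\pi/N\). These arcs divide a small disk around \(p\) into \(2N\ge4\) sectors on which \(z\) alternates in sign. At regular zeros the nodal set is a smooth embedded arc. Unique continuation together with \(z\not\equiv0\) shows that the nodal set has empty interior and that its singular points are isolated in \(\Om\).

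\emph{Step 2: nodal domains reach the boundary, and there are exactly two.} Let \(D\) be a nodal domain of \(z\), that is, a connected component of \(\{z\neq0\}\). If \(\overline D\cap\partial\Om\) contained no boundary point where \(z\neq0\), then \(z=0\) on \(\partial D\). In that case \(z|_D\) would be a Dirichlet eigenfunction on \(D\) with eigenvalue \(\lambda_1(\Om)\). Since \(D\subsetneq\Om\) and \(\Om\setminus\overline D\) has nonempty interior, strict domain monotonicity gives \(\lambda_1(D)>\lambda_1(\Om)\), which is a contradiction. To make this rigorous when \(\partial D\) is irregular, I would test the Rayleigh quotient with \(z\) extended by zero, which lies in \(H^1_0(\Om)\). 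Equality in the variational characterization would force this extension to be a first eigenfunction of \(\Om\), hence positive everywhere, contradicting \(z=0\) on \(\Om\setminus D\).

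Consequently every nodal domain meets \(\partial\Om\) in an open arc where \(z\neq0\). Since \(z\) has exactly two zeros on \(\partial\Om\), the boundary circle \(\partial\Om\setminus\{z=0\}\) splits into exactly two open arcs \(A_1\) and \(A_2\). The sign of \(z\) must differ on them: if both signs agreed, \(z\) would have one sign near the whole boundary, and a nodal domain of the opposite sign (which exists near \(p\)) could not reach the boundary. A nodal domain touching the connected arc \(A_i\) contains that entire arc. Therefore there are exactly two nodal domains, \(\Om^+\supset A_1\) and \(\Om^-\supset A_2\).

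\emph{Step 3: the topological contradiction.} Among the \(2N\ge4\) sectors at \(p\), pick two positive sectors \(S_1,S_3\) and two negative sectors \(S_2,S_4\), arranged so that the negative ones separate the positive ones in cyclic order. All positive sectors lie in the connected open set \(\Om^+\). Join a point of \(S_1\) to a point of \(S_3\) by a path in \(\Om^+\), and close it up through \(p\) by segments inside \(S_1\) and \(S_3\). This gives a Jordan curve \(\Gamma\subset\Om^+\cup\{p\}\) which crosses the small disk at \(p\) transversally. Because \(\Om\) is simply connected, the Jordan curve theorem shows that \(\Gamma\) bounds a disk \(U\subset\Om\). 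Near \(p\), the sectors \(S_2\) and \(S_4\) lie on opposite sides of \(\Gamma\), so one of them, say \(S_2\), lies in \(U\). But \(\Omega^-\) is connected, disjoint from \(\Gamma\), and reaches \(A_2\subset\partial\Om\), which lies outside \(\overline U\). This is a contradiction.

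I expect the main obstacle to be the careful construction of \(\Gamma\): making it a simple closed curve through \(p\), and verifying that \(S_2\) and \(S_4\) really lie on opposite sides of it. This relies on the local transversal picture at \(p\). If needed, I would avoid the singular point by passing \(\Gamma\) through a tiny arc inside \(S_1\cup S_3\) near \(p\) and appeal to the standard planar fact that a curve entering the disk through \(S_1\) and exiting through \(S_3\) separates \(S_2\) from \(S_4\).
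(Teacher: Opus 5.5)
Your proof is correct and follows essentially the same route as the paper. Domain monotonicity forces every nodal domain to reach an open boundary arc; the two boundary arcs then give exactly one positive and one negative nodal domain; and a Jordan curve through \(p\) inside \(\{z>0\}\cup\{p\}\) separates the negative sectors. The differences are minor: you get the sign change between the two arcs from the opposite-sign sectors at \(p\) rather than from the paper's identity \(\int_{\partial\Om}z\,\partial_\nu\psi\,ds=0\) with Hopf's lemma, and your \(H^1_0\) Rayleigh-quotient version of the monotonicity step is a more careful rendering of the paper's Step~1.
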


\begin{proof} By a nodal domain we mean a connected component of the set where
$z$ does not vanish. 

Suppose, for contradiction, that \(z\) has a singular zero
at a point \(p\in\Om\), that is, \(z(p)=0\) and \(\nabla z(p)=0\).
We divide the proof into three steps.

\smallskip
\noindent\emph{Step 1: every nodal domain meets an open boundary arc.}
Here, a nodal domain meets an open boundary arc if its closure contains that
arc. Suppose that a nodal domain \(\Omega_+\) does not do so. Thus, it is a proper subdomain of $\Omega$. By strict domain monotonicity of the principal Dirichlet eigenvalue, any proper subdomain \(\Omega_+ \subsetneq \Omega\) must satisfy 
\[
\lambda_1(\Omega_+) > \lambda_1(\Omega).
\]
Since the nodal domain \(\Omega_+\) is strictly contained in \(\Omega\) and supports a positive eigenfunction corresponding to the same eigenvalue, we obtain \(\lambda_1(\Omega_+) = \lambda_1(\Omega)\), a contradiction.

\smallskip
\noindent\emph{Step 2: $z$ has exactly two nodal domains.} 
By hypothesis, the boundary trace of \(z\) has exactly two zeros on \(\partial\Omega \cong S^1\). Green's identity with the positive first eigenfunction gives
\[
0=\int_{\partial\Om}z\,\partial_\nu\psi\,ds.
\]
By the Hopf lemma, $\partial_\nu\psi<0$. Thus the two boundary zeros force a partition of the boundary curve into exactly one positive open arc \(\Gamma^+\) where $z>0$ and one negative open arc \(\Gamma^-\) where $z<0$.

Each point of \(\Gamma^+\) has a boundary collar contained in a single
positive nodal domain. The selected nodal domain is locally fixed along
the connected arc, hence fixed. Step 1 says that every positive nodal
domain meets \(\Gamma^+\), so $\{z > 0\}$ is connected. The same argument shows
that $\{z < 0\}$ is connected. Thus $z$ has exactly two nodal domains. 

\smallskip
\noindent\emph{Step 3: global Jordan separation and contradiction.}
By the local structure of nodal sets (see \cite{Cheng}), the vanishing order of \(z\) at \(p\) is at least \(2\), and a small neighborhood of \(p\) is partitioned into \(2k \ge 4\) alternating positive and negative nodal sectors meeting at \(p\). 

Choose four such alternating sectors. Since the positive nodal domain is connected, short arcs from $p$ into the selected positive sectors can be joined by a simple arc in $\{z > 0\}$. After trimming intersections, their union is a
Jordan curve $L$ with $L\setminus \{p\} \subset \{z > 0\}$.

Since $L\subset \Omega$ and $\Omega$ is simply connected, the Jordan–Schoenflies theorem shows that $\Omega\setminus L$ has two components. 
Near $p$, the two selected negative sectors lie on opposite local sides of $L$ and therefore in different components of $\Omega\setminus L$. But $\{z < 0\}$ is connected, is disjoint from $L$, and
contains points in both sectors. This is a contradiction.
\end{proof}



\section{Log-concavity of the First Eigenfunction}

\begin{proof}[Proof of Theorem \ref{thm:main}]
We now carry out the reduction described in the introduction.
Let \(u=-\log\psi_1\). We first show that \(\Hess u\) is nonsingular at every
point of \(\Om\).

Suppose instead that \(\Hess u(p)\) is degenerate. Since it is a symmetric
endomorphism of the two-dimensional space \(T_p\HH^2\), it has a unit kernel
vector \(v\). Construct \(z\) by Lemma \ref{lem:shift}. Hopf's lemma gives
\(\nabla\psi_1\neq0\) on \(\partial\Om\), so Proposition \ref{prop:GP}
applies. Together with the boundary identity in Lemma \ref{lem:shift}, it
shows that \(z\) has exactly two boundary zeros. In particular
\(z\not\equiv0\).

A horoconvex domain in \(\HH^2\) is geodesically convex, hence simply
connected. Proposition \ref{prop:nodal} therefore applies to \(z\), but Lemma
\ref{lem:shift} gives
\[
  z(p)=0,
  \qquad
  \nabla z(p)=0.
\]
This contradiction proves
\[
  \det\Hess u\neq0
  \qquad\text{throughout }\Om.
\]

The determinant has constant sign because \(\Om\) is connected. Let \(x_0\)
be a maximum point of \(\psi_1\). Then \(u\) has a local minimum at \(x_0\), so
\(\Hess u(x_0)\geq0\). Nonsingularity makes this inequality strict, and hence
\(\det\Hess u>0\) throughout \(\Om\). Finally, \eqref{eq:u} gives
\[
  \tr\Hess u
  =\Delta u
  =\lambda_1+|\nabla u|^2
  >0.
\]
A symmetric two-by-two matrix with positive determinant and positive trace is
positive definite. Thus \(\Hess u>0\) on \(\Om\).
\end{proof}





\begin{remark}[Other space forms]\label{rem:other-space-forms}
The same shifted proof gives alternate proofs in the other two-dimensional
space forms under the remaining clauses of the Grossi--Provenzano
boundary-zero theorem. More precisely,
\[
  \Hess_{\mathbb R^2}(-\log\psi_1)>0
\]
on every bounded smooth convex planar domain with positive boundary
curvature, and
\[
  \Hess_{\mathbb S^2}(-\log\psi_1)>0
\]
on every bounded smooth geodesically convex spherical domain with positive
boundary curvature and diameter less than \(\pi/2\). These follow from
\cite[Proposition 3.15(i),(ii)]{GrossiProvenzano}. For the required field in either space form, the proof of Lemma
\ref{lem:translation} again gives \(K(p)=v\) and \(\nabla K|_p=0\). Strict spherical log-concavity is already known under broader strict
convexity hypotheses through Lee and Wang \cite{LeeWang}. 
The present
argument is an alternate proof under the additional condition of a diameter less than \(\pi/2\).
\end{remark}

\section{General Killing fields}\label{sec:killing2}

The proof of Theorem \ref{thm:level} uses Killing fields whose covariant
derivative does not vanish at the base point. This section records their
structure. Fix an orientation of \(\HH^2\) and let \(J\) denote rotation by
\(\pi/2\) in the tangent planes, so \(J\) is parallel, skew
(\(\langle JX,Y\rangle=-\langle X,JY\rangle\)), and \(J^2=-\mathrm{Id}\).

\begin{lemma}\label{lem:killing-structure}
Let \(K\) be a Killing field on \(\HH^2\). Then there is a smooth function
\(\omega\) with
\begin{equation}\label{eq:nablaK}
  \nabla_XK=\omega\,JX\qquad\text{for all }X,
\end{equation}
and
\begin{equation}\label{eq:domega}
  d\omega=-\langle JK,\cdot\rangle .
\end{equation}
The quantity
\begin{equation}\label{eq:invariant}
  I_K:=|K|^2-\omega^2
\end{equation}
is constant on \(\HH^2\). Moreover, for every \(p\in\HH^2\) the evaluation
map \(K\mapsto(K(p),\omega(p))\) is a linear bijection from the space of
Killing fields onto \(T_p\HH^2\times\mathbb R\).
\end{lemma}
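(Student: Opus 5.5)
The proof has four parts: obtain \(\omega\) from the Killing equation, get \(d\omega\) from the curvature identity for Killing fields, differentiate \(I_K\) using these two formulas, and read off the bijection from the fact that a Killing field is determined by its 1-jet at a point.

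\emph{Existence of \(\omega\).} The Killing equation says that \(\nabla K\) is skew-adjoint at every point. On an oriented surface every skew endomorphism of a tangent plane is a scalar multiple of \(J\). So there is a unique function \(\omega\) with \(\nabla_XK=\omega JX\), namely \(\omega=\langle\nabla_{e_1}K,Je_1\rangle\) for any local unit field \(e_1\). Smoothness of \(\omega\) follows from this formula.

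\emph{Formula for \(d\omega\).} I will use the standard identity for Killing fields,
\[
\nabla^2_{X,Y}K=-R(K,X)Y,
\]
with the convention \(R(X,Y)Z=\nabla_X\nabla_YZ-\nabla_Y\nabla_XZ-\nabla_{[X,Y]}Z\). Since \(J\) is parallel, differentiating \eqref{eq:nablaK} gives \(\nabla^2_{X,Y}K=(X\omega)JY\). Curvature \(-1\) means
\[
R(K,X)Y=-\bigl(\langle X,Y\rangle K-\langle K,Y\rangle X\bigr).
\]
Pair both sides with \(JY\) for a unit vector \(Y\perp X\). The left side gives \(X\omega\). The right side gives \(\langle X,Y\rangle\langle K,JY\rangle-\langle K,Y\rangle\langle X,JY\rangle=-\langle K,Y\rangle\langle X,JY\rangle\). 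Choosing \(Y=-JX\) for unit \(X\) reduces this to \(-\langle JK,X\rangle\), which is \eqref{eq:domega}. The sign is the one step I need to recheck. As a consistency test I will use the rotation field about a point \(o\), whose length is \(\sinh r\); there \(\omega=\pm\cosh r\), and this should match \eqref{eq:domega}.

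\emph{Constancy of \(I_K\).} For any \(X\),
\[
X|K|^2=2\langle\nabla_XK,K\rangle=2\omega\langle JX,K\rangle=-2\omega\langle X,JK\rangle,
\]
while \(X(\omega^2)=2\omega\,d\omega(X)=-2\omega\langle JK,X\rangle\). These are equal, so \(dI_K=0\) and \(I_K\) is constant because \(\HH^2\) is connected.

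\emph{The bijection.} The map \(K\mapsto(K(p),\omega(p))\) is linear. For injectivity: a Killing field with \(K(p)=0\) and \(\omega(p)=0\) has vanishing 1-jet at \(p\). Along any geodesic \(\gamma\) from \(p\), the field \(K\circ\gamma\) is a Jacobi field, so \(K\) vanishes on all of \(\HH^2\). Alternatively, the pair \((K,\omega)\) satisfies the closed first-order linear system \eqref{eq:nablaK}, \eqref{eq:domega}, so vanishing at \(p\) forces vanishing along every curve from \(p\). For surjectivity, the isometry group of \(\HH^2\) is \(PSL_2(\mathbb R)\), so the space of Killing fields has dimension \(3\), which equals \(\dim(T_p\HH^2\times\mathbb R)\). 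Explicitly, the translation fields of Lemma \ref{lem:translation} realise every \((v,0)\), and the rotation field about \(p\) realises \((0,1)\).

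I expect the sign bookkeeping in the \(d\omega\) step to be the only delicate point.
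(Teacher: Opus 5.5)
Your proposal is correct and follows essentially the same route as the paper: \(\omega\) from skew-symmetry of \(\nabla K\), \(d\omega\) from the Killing identity \(\nabla^2_{X,Y}K=-R(K,X)Y\) paired against \(JY\), direct differentiation of \(I_K\), and injectivity via the closed first-order system together with a dimension count. The sign you flagged is right: your choice \(Y=-JX\) gives exactly \(X\omega=-\langle JK,X\rangle\), whereas the paper keeps \(Y\) general and expands \(JK\) in the frame \(\{Y,JY\}\). Your explicit surjectivity argument, using the translation fields and the rotation about \(p\), is a harmless addition.
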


\begin{proof}
For a Killing field, \(\nabla K\) is skew-symmetric; on an oriented surface
the skew-symmetric endomorphisms of a tangent plane are the multiples of
\(J\), which gives \eqref{eq:nablaK}.

Killing fields satisfy the second-order identity
\begin{equation}
    (\nabla_X \nabla K)Y =\nabla_X \nabla_Y K -\nabla_{\nabla_XY} K =  -R(K,X)Y,
    \end{equation}
see e.g. \cite[Proposition 8.1.3]{Petersen}.
    Since
\(J\) is parallel, the left side is \(d\omega(X)\,JY\). For curvature
\(-1\), \(R(X,Y)Z=\langle X,Z\rangle Y-\langle Y,Z\rangle X\), so
\[
  d\omega(X)\,JY=\langle X,Y\rangle K-\langle K,Y\rangle X .
\]
Pairing with \(JY\) for a unit vector \(Y\) and expanding
\(JK=-\langle K,JY\rangle Y+\langle K,Y\rangle JY\) gives
\eqref{eq:domega}.

From \eqref{eq:nablaK} and \eqref{eq:domega},
\begin{align*}
  d|K|^2(X)&=2\langle\nabla_XK,K\rangle=2\omega\langle JX,K\rangle\\
  &=-2\omega\langle X,JK\rangle
  =2\omega\,d\omega(X)=d(\omega^2)(X),
\end{align*}
so \(I_K\) is constant.

The map \(K\mapsto(K(p),\omega(p))\) is linear between three-dimensional
spaces, so it suffices to show injectivity. If \(K(p)=0\) and
\(\omega(p)=0\), then \eqref{eq:nablaK} and \eqref{eq:domega} form a linear
first-order system for \((K,\omega)\) along any curve from \(p\), with zero
initial data; hence \(K\equiv0\).
\end{proof}

\begin{remark}
For the translation field \(K_v\) of Lemma \ref{lem:translation} one has
\(\omega(p)=0\) and \(|K_v(p)|=1\), so \(I_{K_v}=1\).
\end{remark}

\begin{lemma}\label{lem:classification}
Let \(K\not\equiv0\) be a Killing field on \(\HH^2\) with \(I_K>0\). Then
\(K\) generates the one-parameter group of hyperbolic translations along a
geodesic \(\gamma\), its axis, and the integral curves of \(K\) are
\(\gamma\) and its equidistant curves. Moreover \(|K|^2\ge I_K>0\), so
\(K\) has no zeros.
\end{lemma}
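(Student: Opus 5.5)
The plan is to use the evaluation bijection of Lemma \ref{lem:killing-structure} together with the invariant \(I_K\), and to reduce to the model translation field of Lemma \ref{lem:translation}.

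\emph{Step 1: a point where \(\omega\) vanishes.} We claim \(\omega\) has a zero. Suppose first that \(\omega(p)\neq0\) at some point \(p\). Since \(I_K>0\), we have \(|K(p)|^2=I_K+\omega(p)^2>0\), so \(K(p)\neq0\). By \eqref{eq:domega}, the directional derivative of \(\omega\) along \(JK\) equals \(-|K|^2\), which is at most \(-I_K<0\). This holds wherever we are, since \(|K|^2\ge I_K\) everywhere by constancy of \(I_K\). Hence \(\omega\) is strictly monotone along the integral curve \(\sigma\) of the complete field \(JK/|K|\) through \(p\), with \(d\omega/dt=-|K|\le -\sqrt{I_K}\). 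So \(\omega\) changes by at least \(\sqrt{I_K}\,|t|\) and must cross zero at some finite time. Completeness of the flow of the unit field \(JK/|K|\) is fine because it is a bounded smooth field on a complete manifold.

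Fix a point \(q\) with \(\omega(q)=0\). There \(|K(q)|^2=I_K\).

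\emph{Step 2: identify \(K\) with a translation field.} Set \(v=K(q)/\sqrt{I_K}\), a unit vector. The field \(\sqrt{I_K}\,K_v\) from Lemma \ref{lem:translation} has value \(K(q)\) at \(q\) and vanishing rotation scalar there, since \(\nabla K_v|_q=0\). By the uniqueness part of Lemma \ref{lem:killing-structure}, \(K=\sqrt{I_K}\,K_v\). Therefore \(K\) generates hyperbolic translations along the geodesic \(\gamma\) through \(q\) tangent to \(v\).

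\emph{Step 3: integral curves and absence of zeros.} Translations along \(\gamma\) preserve \(\gamma\) and each equidistant curve \(\{\dist(\cdot,\gamma)=r\}\) on either side of \(\gamma\). Hence the orbits, which are the integral curves, are \(\gamma\) and its equidistant curves. The bound \(|K|^2=I_K+\omega^2\ge I_K>0\) is immediate from the invariant, so \(K\) has no zeros.

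The only delicate point is Step 1, namely guaranteeing a zero of \(\omega\). The gradient estimate \(|d\omega|=|K|\ge\sqrt{I_K}\), combined with completeness along the flow of \(JK/|K|\), handles it. Alternatively, one could invoke the standard classification of \(\mathrm{PSL}(2,\mathbb R)\) elements by trace, but the direct argument stays within the paper's tools.
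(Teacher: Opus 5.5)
Your proof is correct, but it takes a different route from the paper's. The paper uses the classical trichotomy of one-parameter groups of orientation-preserving isometries. An elliptic group fixes a point \(q\), so \(I_K=-\omega(q)^2\le0\). A parabolic group is conjugate to a multiple of \(\partial_x\) in the upper half-plane, where a direct computation gives \(I_K=0\). So \(I_K>0\) leaves only the hyperbolic case.

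You never use that classification and work entirely with the structure equations of Lemma \ref{lem:killing-structure}. Because \(|K|\ge\sqrt{I_K}\) everywhere, \(JK/|K|\) is a smooth complete unit field along which \(\omega\) decreases at rate \(|K|\ge\sqrt{I_K}\). Hence \(\omega\) has a zero \(q\), and the evaluation bijection forces \(K=\sqrt{I_K}\,K_v\) with \(v=K(q)/\sqrt{I_K}\).

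The paper's argument is shorter once the classification is granted. It also extends readily to sorting all nonzero Killing fields by the sign of \(I_K\): a nonzero elliptic field has \(\omega\neq0\) at its fixed point, hence \(I_K<0\). That is the picture Remark \ref{rem:sharp} relies on when it calls an \(I_K=0\) field parabolic. Your monotonicity argument is specific to \(I_K>0\): for \(I_K=0\) one has \(|K|=|\omega|\), and \(\omega\) never reaches zero unless \(K\equiv0\).

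In exchange, your argument is self-contained within the paper's own tools plus the existence of the translation fields of Lemma \ref{lem:translation}. It also locates the axis concretely as the geodesic through a zero of \(\omega\) in the direction of \(K\), which fits naturally with the distance formula \eqref{eq:invariant-dist}.
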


\begin{proof}
\(K\) generates a one-parameter group of orientation-preserving isometries
of \(\HH^2\), which is elliptic, parabolic, or hyperbolic.

If the group is elliptic with fixed point \(q\in\HH^2\), then \(K(q)=0\),
so \(I_K=-\omega(q)^2\le0\).

If the group is parabolic, then after conjugation by an isometry and
scaling we may take \(K=\partial_x\) in the upper half-plane model
\(g=y^{-2}(dx^2+dy^2)\). Then
\(\nabla_{\partial_x}\partial_x=y^{-1}\partial_y=y^{-1}J\partial_x\), so
\(\omega=1/y\), while \(|K|^2=y^{-2}\); hence \(I_K=0\). Conjugation
preserves \(I_K\) and scaling preserves its sign, so \(I_K=0\) for every
parabolic field.

Hence \(I_K>0\) forces the hyperbolic case: the group translates along a
geodesic \(\gamma\), and its orbits are \(\gamma\) and the curves at fixed
distance from \(\gamma\). Finally \(|K|^2=I_K+\omega^2\ge I_K\).
\end{proof}

Lemma \ref{lem:shift-general} extends the shifted derivative of Lemma \ref{lem:shift} to the setting when $\omega(p) \neq 0$;
the only change in the computation is the term produced by
\(\nabla K\neq0\). When \(\omega(p)=0\) and \(K(p)=v\) lies in the kernel of
\(\Hess u(p)\), we recover Lemma \ref{lem:shift}.

\begin{lemma}\label{lem:shift-general}
Let \(K\) be a Killing field, let \(p\in\Om\), and define
\begin{equation}\label{eq:shift-general}
  z:=K(\psi_1)+c_0\psi_1,
  \qquad
  c_0=K(u)(p).
\end{equation}
Then \((\Delta+\lambda_1)z=0\),
\(z|_{\partial\Om}=K(\psi_1)|_{\partial\Om}\), \(z(p)=0\), and
\[
  \nabla z(p)
  =-\psi_1(p)\bigl(\Hess u|_p\,K(p)-\omega(p)\,J\nabla u(p)\bigr).
\]
In particular, \(z\) has a singular zero at \(p\) if and only if
\(\Hess u|_p\,K(p)=\omega(p)\,J\nabla u(p)\).
\end{lemma}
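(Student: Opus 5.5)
We follow the proof of Lemma \ref{lem:shift} line by line; the only change is the extra term produced by \(\nabla K|_p\neq0\).

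\emph{Equation and boundary trace.} By Lemma \ref{lem:commutator}, \(K(\psi_1)\) solves \((\Delta+\lambda_1)f=0\), and so does \(\psi_1\). Hence every linear combination, in particular \(z\), solves it. Since \(\psi_1=0\) on \(\partial\Om\), the term \(c_0\psi_1\) does not change the boundary trace, so \(z|_{\partial\Om}=K(\psi_1)|_{\partial\Om}\).

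\emph{Vanishing at \(p\).} Substituting \(\psi_1=e^{-u}\) gives \(K(\psi_1)=-\psi_1K(u)\). Therefore
\[
  z=-\psi_1\,h,\qquad h:=K(u)-K(u)(p),
\]
and \(z(p)=0\) because \(h(p)=0\).

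\emph{Gradient at \(p\).} The product rule gives \(\nabla z=-\psi_1\nabla h-h\nabla\psi_1\). Since \(h(p)=0\), this reduces at \(p\) to \(\nabla z(p)=-\psi_1(p)\nabla h(p)\). To compute \(\nabla h\), differentiate \(K(u)=\langle K,\nabla u\rangle\) in a direction \(X\):
\[
  X\langle K,\nabla u\rangle=\Hess u(K,X)+\langle\nabla_XK,\nabla u\rangle .
\]
By \eqref{eq:nablaK} in Lemma \ref{lem:killing-structure}, \(\nabla_XK=\omega JX\). Skewness of \(J\) then gives
\[
  \langle\omega JX,\nabla u\rangle=-\omega\langle X,J\nabla u\rangle .
\]
Because \(\Hess u\) is symmetric, \(\nabla h=\Hess u\,K-\omega J\nabla u\) as a vector. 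Evaluating at \(p\) yields the stated formula
\[
  \nabla z(p)=-\psi_1(p)\bigl(\Hess u|_p\,K(p)-\omega(p)\,J\nabla u(p)\bigr).
\]

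\emph{Characterization of the singular zero.} We already know \(z(p)=0\), and \(\psi_1(p)>0\) because \(p\) lies in the interior. So \(\nabla z(p)=0\) holds if and only if \(\Hess u|_p\,K(p)=\omega(p)\,J\nabla u(p)\), which is the stated criterion.

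The only point needing care is the sign in the skew-symmetry step. We have to use the convention \(\langle JX,Y\rangle=-\langle X,JY\rangle\) fixed at the start of Section \ref{sec:killing2}, so that the sign matches the statement. Apart from that, the argument is a direct computation.
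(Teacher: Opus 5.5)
Your proposal is correct and follows the paper's proof essentially step for step. It uses the same factorization \(z=-\psi_1\bigl(K(u)-K(u)(p)\bigr)\), differentiates \(\langle K,\nabla u\rangle\) with \(\nabla_XK=\omega JX\) and the skewness of \(J\), and evaluates at \(p\), where the factor vanishes. Your explicit remark that \(\psi_1(p)>0\) justifies the final ``if and only if,'' which the paper leaves implicit.
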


\begin{proof}
Lemma \ref{lem:commutator} and \eqref{eq:eigenfunction} show that
\(K(\psi_1)\) and \(\psi_1\) both solve \((\Delta+\lambda_1)f=0\), and the
shift vanishes on \(\partial\Om\). From \(\psi_1=e^{-u}\) we get
\(K(\psi_1)=-\psi_1K(u)\) and \(c_0=-K(\psi_1)(p)/\psi_1(p)\), so
\[
  z=-\psi_1\bigl[K(u)-K(u)(p)\bigr]
\]
and \(z(p)=0\). Differentiating \(K(u)=\langle K,\nabla u\rangle\) with
\eqref{eq:nablaK},
\begin{align*}
  X\bigl(K(u)\bigr)
  &=\Hess u(K,X)+\langle\nabla_XK,\nabla u\rangle
  =\Hess u(K,X)+\omega\langle JX,\nabla u\rangle\\
  &=\bigl\langle \Hess u\,K-\omega J\nabla u,\;X\bigr\rangle,
\end{align*}
using skewness of \(J\) in the last step. At \(p\) the factor
\(K(u)-K(u)(p)\) vanishes, so
\(\nabla z(p)=-\psi_1(p)\nabla\bigl(K(u)\bigr)(p)\), which is the displayed
formula. 
\end{proof}

\section{A boundary-zero count for arbitrary axes}\label{sec:count}

This section proves the count that replaces Proposition \ref{prop:GP} when
the axis of the translation field does not pass through \(\Om\). We use two
standard Hessian formulas in \(\HH^2\): for a Busemann function \(b\)
(so \(|\nabla b|=1\), the sublevel sets \(\{b\le c\}\) are horoballs, and
the level curves are horocycles),
\begin{equation}\label{eq:hessb}
  \Hess b=g-db\otimes db,
\end{equation}
and for the signed distance \(s\) to a geodesic \(\gamma\),
\begin{equation}\label{eq:hesss}
  \Hess s=\tanh(s)\,\bigl(g-ds\otimes ds\bigr).
\end{equation}
Both follow from the Riccati comparison along the normal geodesics: the
level curves of \(b\) are horocycles of curvature \(1\), and the level
curves of \(s\) are the equidistant curves of \(\gamma\), of curvature
\(|\tanh s|\) with curvature vector pointing toward \(\gamma\).

\begin{lemma}\label{lem:two-points}
Let \(\Om\subset\HH^2\) be a bounded smooth horoconvex domain and let
\(h\subset\HH^2\) be a complete embedded curve of constant geodesic
curvature \(\kappa_0\in[0,1)\), that is, a geodesic or an equidistant
curve. Then \(\partial\Om\cap h\) contains at most two points.
\end{lemma}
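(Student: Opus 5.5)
The plan is to show that \(\overline\Om\cap h\) is a single compact arc (possibly a point or empty) of \(h\), and that only the endpoints of that arc can lie on \(\partial\Om\). Since \(\kappa_0<1\), the curve \(h\) is a geodesic or an equidistant curve, so it is a properly embedded line and not a closed curve. Parametrize it by arclength \(h(t)\), \(t\in\mathbb R\), with \(h''=\kappa_0 n\) for a unit normal field \(n\).

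\emph{Step 1: Busemann functions restricted to \(h\) have no interior local maxima.} Let \(b\) be any Busemann function and set \(f=b\circ h\). By \eqref{eq:hessb},
\[
  f''=\Hess b(h',h')+\langle\nabla b,h''\rangle
  =1-(f')^2+\kappa_0\langle\nabla b,n\rangle .
\]
At a critical point, \(f'=0\) forces \(\nabla b\perp h'\), hence \(\nabla b=\pm n\). Then \(f''\ge 1-\kappa_0>0\). So every critical point of \(f\) is a strict local minimum.

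It follows that on any compact interval \([t_1,t_2]\) the maximum of \(f\) is attained only at the endpoints. Indeed, \(f\) is not constant on any subinterval, since such a subinterval would consist of degenerate critical points. Hence \(f(t)<\max\{f(t_1),f(t_2)\}\) for \(t_1<t<t_2\).

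\emph{Step 2: connectedness with strictness.} Take \(x=h(t_1)\) and \(y=h(t_2)\) in \(\overline\Om\). Every horoball \(\{b\le c\}\supset\overline\Om\) contains \(x\) and \(y\). By Step 1, \(b(h(t))<c\) for \(t_1<t<t_2\). Since \(\overline\Om\) is the intersection of such horoballs, the arc \(h([t_1,t_2])\) lies in \(\overline\Om\). Thus \(\overline\Om\cap h\) is a connected compact subset of the line \(h\), i.e. an arc \(h([a,e])\); it is compact because \(\Om\) is bounded.

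Now suppose \(q=h(t)\in\partial\Om\) with \(a<t<e\). Take the supporting horoball \(H=\{b\le c\}\supset\overline\Om\) at \(q\), so that \(b(q)=c\). Applying Step 1 to \(x=h(a)\) and \(y=h(e)\) gives \(b(q)<c\), a contradiction. Hence \(\partial\Om\cap h\subset\{h(a),h(e)\}\), which has at most two points.

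\emph{Main obstacle.} The delicate point is the sign control in Step 1. In general \(f''\) can be negative, since the lower bound \(t(t-\kappa_0)\) with \(t=\sqrt{1-(f')^2}\) is negative for small \(t\). So I would not try to prove convexity of \(f\). The argument relies only on the behavior at critical points, where \(|\nabla b|=1\) forces \(\nabla b\) to be normal to \(h\) and the bound \(1-\kappa_0>0\) is exactly where the hypothesis \(\kappa_0<1\) enters. I would then pass from "every critical point is a strict minimum" to "no interior maximum on any interval" by an elementary one-variable argument.
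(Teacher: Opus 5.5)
Your proof is correct and follows essentially the same route as the paper. Both arguments rest on the critical-point bound \(f''\ge1-\kappa_0>0\) for \(f=b\circ h\), which shows that \(\overline\Om\cap h\) is a single arc, and then use a supporting horoball at an interior boundary point of that arc to produce an interior maximum of \(f\), a contradiction. One small slip: your stated reason for ``no interior maximum'' only rules out plateaus; the actual reason, which you give at the end, is that an interior maximum point would be a critical point that is a local maximum, hence not a strict local minimum.
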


\begin{proof}
Parametrize \(h\) by arclength, \(h:\mathbb R\to\HH^2\), with covariant
acceleration \(|\nabla_{h'}h'|=\kappa_0\). Let \(b\) be a Busemann function
whose horoball \(\{b\le c\}\) contains \(\Om\), and set
\(f=b\circ h\). At a critical point \(t\) of \(f\) we have
\(\langle\nabla b,h'\rangle=0\), so by \eqref{eq:hessb}
\[
  f''(t)=\Hess b(h',h')+\langle\nabla b,\nabla_{h'}h'\rangle
  =1+\langle\nabla b,\nabla_{h'}h'\rangle\ge1-\kappa_0>0 .
\]
Every critical point of \(f\) is a strict local minimum. Consequently
every sublevel set \(\{f\le c\}\) is an interval: otherwise \(f\) would
attain an interior local maximum value \(>c\) on the interval between two
sublevel components, and each maximum point (or plateau point) would be a
critical point that is not a strict local minimum.

By Definition \ref{def:horoconvex}, \(\overline\Om\) is an intersection of
horoballs, so \(\overline\Om\cap h\) is an intersection of
intervals, hence an interval \(I\subset h\) (possibly empty or a point).

Let \(x\in\partial\Om\cap h\) and suppose \(x\) is an interior point of
\(I\). As recalled after Definition \ref{def:horoconvex}, there is also a
supporting horoball
at \(x\): a Busemann function \(b\) with \(\overline\Om\subset\{b\le c\}\)
and \(b(x)=c\). Then \(f=b\circ h\) satisfies \(f\le c\) on \(I\) with
equality at the interior point \(x\), so \(x\) is a local maximum of \(f\),
hence a critical point that is not a strict local minimum, a contradiction.
Therefore \(\partial\Om\cap h\subset\partial I\), which contains at most
two points.
\end{proof}

\begin{proposition}\label{prop:count}
Let \(\Om\subset\HH^2\) be a bounded smooth horoconvex domain and let
\(\psi\in C^1(\overline\Om)\) satisfy \(\psi>0\) in \(\Om\), \(\psi=0\) and
\(\nabla\psi\neq0\) on \(\partial\Om\). Let \(K\not\equiv0\) be a Killing
field with \(I_K>0\). Then \(K(\psi)\) has exactly two zeros on
\(\partial\Om\).
\end{proposition}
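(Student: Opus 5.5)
Since \(\psi=0\) on \(\partial\Om\) and \(\nabla\psi\neq0\) there, \(\nabla\psi=-|\nabla\psi|\nu\) along \(\partial\Om\), where \(\nu\) is the outward unit normal. Hence \(K(\psi)=-|\nabla\psi|\langle K,\nu\rangle\), and the zeros of \(K(\psi)\) on \(\partial\Om\) are exactly the points where \(K\) is tangent to \(\partial\Om\). By Lemma \ref{lem:classification}, \(K\) has no zeros; it generates translations along an axis \(\gamma\), and its integral curves are \(\gamma\) and its equidistant curves. These are complete embedded curves of constant geodesic curvature \(|\tanh s|<1\). So the task is to count the points of \(\partial\Om\) at which \(\partial\Om\) is tangent to one of the level curves of the signed distance \(s\) to \(\gamma\). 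Equivalently, we count the critical points of \(\sigma=s|_{\partial\Om}\), since \(K\perp\nabla s\).

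\emph{Lower bound.} Since \(\partial\Om\) is a closed curve, \(\sigma\) attains a maximum and a minimum. If these coincide in value, then \(\partial\Om\) lies in a single equidistant curve \(h\), which contradicts Lemma \ref{lem:two-points}. So \(\sigma\) has at least two critical points. For the full count we also use the sign argument: \(\langle K,\nu\rangle\) integrates to zero over \(\partial\Om\), since \(\operatorname{div}K=0\). Thus it changes sign, and the zeros counted below are simple sign changes.

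\emph{Upper bound.} This is the main step. Suppose \(x\in\partial\Om\) is a critical point of \(\sigma\) with value \(s_0\), and let \(h=\{s=s_0\}\), which is tangent to \(\partial\Om\) at \(x\). First I claim that \(\overline\Om\) lies on one side of \(h\) and touches it only at \(x\). By Lemma \ref{lem:two-points}, \(\overline\Om\cap h\) is an interval \(I\) of \(h\), and \(\partial\Om\cap h\subset\partial I\). If \(I\) were a nondegenerate arc, its interior would lie in \(\Om\), so \(h\) would cross into \(\Om\). But at the endpoint \(x\) the curve \(h\) is tangent to \(\partial\Om\). This forces \(h\) to leave \(\overline\Om\) near \(x\) on the curvature side, contradicting that \(I\) contains a neighborhood of \(x\) in one direction. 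To make this precise, I would compare curvatures: \(k_g(\partial\Om)\ge1>|\tanh s_0|=k_g(h)\). Hence near a tangency \(\partial\Om\) lies strictly inside the side of \(h\) toward which \(\partial\Om\) curves, and \(h\cap\overline\Om=\{x\}\) locally. Combined with the interval property, \(\overline\Om\cap h=\{x\}\).

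It follows that every critical value \(s_0\) of \(\sigma\) is attained at a point where all of \(\overline\Om\) lies on one side of the level set of \(s\). Hence \(s_0=\max\sigma\) or \(s_0=\min\sigma\), the extremum is strict, and it is attained at a single point. Therefore \(\sigma\) has exactly two critical points, a unique maximum and a unique minimum, and \(K(\psi)\) has exactly two zeros on \(\partial\Om\).

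The delicate point is the local comparison. Near \(x\), \(\Om\) must lie on the same side of \(h\) as the \(\gamma\)-side or the far side, and each orientation needs care. The key inequality \(k_g(\partial\Om)\ge1>k_g(h)\) shows that \(\partial\Om\) bends more than \(h\) in whichever direction it curves. Since horoconvex implies convex, \(\partial\Om\) curves toward \(\Om\), so \(\Om\) lies locally strictly on one side of \(h\). I would then rely on Lemma \ref{lem:two-points} to globalize this to \(\overline\Om\cap h=\{x\}\).
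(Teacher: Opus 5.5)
Your proof is correct, but it globalizes differently from the paper.

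\textbf{The paper's route.} It computes $f''(0)=\tanh(s_0)\pm k_g\neq0$ at every critical point of $f=s|_{\partial\Om}$. It concludes that the critical points are nondegenerate, finitely many, and alternate between maxima and minima. Assuming two local maxima, it picks a regular level just below the smaller one. That level meets $\partial\Om$ in four points lying on one equidistant curve, which contradicts the statement of Lemma~\ref{lem:two-points}.

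\textbf{Your route.} You use the same second-order comparison: $k_g\ge1>|\tanh s_0|$ is exactly the paper's nonvanishing of $f''(0)$. You keep its sign, so that $h$ leaves $\overline\Om$ on both sides of the tangency point $x$. You combine this with the interval structure of $\overline\Om\cap h$ to get $\overline\Om\cap h=\{x\}$. Note that this interval structure is established inside the proof of Lemma~\ref{lem:two-points}, not in its statement. It follows that every critical point is a strict, uniquely attained global extremum, so there are at most two, and your lower-bound argument gives at least two.

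\textbf{An implicit step.} Passing from $\overline\Om\cap h=\{x\}$ to ``$\overline\Om$ lies on one side of $h$'' needs a connectedness argument. For instance, $s-s_0$ is continuous and nonvanishing on the connected arc $\partial\Om\setminus\{x\}$, hence of one sign there. This is what makes $x$ the unique maximum or unique minimum point of $s|_{\partial\Om}$.

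\textbf{Comparison.} Your argument gives a stronger geometric conclusion: the equidistant curve through each tangency point supports all of $\overline\Om$. It also avoids the isolation, alternation and regular-value bookkeeping. The paper's argument, by contrast, uses Lemma~\ref{lem:two-points} only as a black box (the two-point count) plus one-variable calculus.

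\textbf{A side remark.} Your divergence-theorem observation about sign changes is not needed for the count. By itself it also does not show that the zeros are simple.
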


\begin{proof}
By Lemma \ref{lem:classification}, \(K\) is a translation field with some
axis \(\gamma\), it has no zeros, and its integral curves are the level
curves of the signed distance \(s\) to \(\gamma\). On \(\partial\Om\) the
gradient \(\nabla\psi\) is a nonzero normal vector, so for
\(x\in\partial\Om\),
\begin{align*}
  K(\psi)(x)=0
  &\iff K(x)\in T_x\partial\Om\\
  &\iff \text{the level curve of } s \text{ through } x
       \text{ is tangent to } \partial\Om \text{ at } x\\
  &\iff x \text{ is a critical point of } s|_{\partial\Om}.
\end{align*}

Parametrize \(\partial\Om\) by arclength \(\sigma\mapsto\alpha(\sigma)\)
with inward normal \(N\) and geodesic curvature \(k_g\ge1\), so
\(\nabla_{\alpha'}\alpha'=k_gN\). Let \(x=\alpha(0)\) be a critical point
of \(f:=s\circ\alpha\), lying on the level curve \(\{s=s_0\}\). By
\eqref{eq:hesss} and \(\langle\nabla s,\alpha'\rangle=0\) at \(\sigma=0\),
\[
  f''(0)=\Hess s(\alpha',\alpha')+\langle\nabla s,k_gN\rangle
  =\tanh(s_0)\pm k_g .
\]
Since \(k_g\ge1>|\tanh(s_0)|\), \(f''(0)\neq0\): every critical point of
\(f\) is a nondegenerate strict local extremum. In particular the critical
points are isolated, hence finitely many, and along \(\partial\Om\) they
alternate between local maxima and local minima, say \(k\) of each,
\(k\ge1\). There is at least one of each because \(f\) attains its maximum
and minimum.

Suppose \(k\ge2\). Then \(f\) has two strict local maxima \(x_1,x_2\) with
values \(s_1\ge s_2\). Each of the two open arcs of \(\partial\Om\) between
\(x_1\) and \(x_2\) contains points with value \(<s_2\), immediately beside
the strict maximum \(x_2\), so the minimum of \(f\) over each open arc is
\(<s_2\). Choose a regular value \(c\) of \(f\) with
\(\max(\text{arc minima})<c<s_2\). On each arc, \(f\) passes from a value
\(\ge s_2>c\) at the endpoints to a value \(<c\) inside, so \(f=c\) at
least twice per arc: the level \(\{f=c\}\) contains at least four points.
All four lie on the single level curve \(\{s=c\}\), which is a geodesic or
an equidistant curve of curvature \(|\tanh c|<1\). This contradicts
Lemma \ref{lem:two-points}. 

Hence \(k=1\): the function \(s|_{\partial\Om}\) has exactly one maximum
and one minimum, and \(K(\psi)\) has exactly two zeros on \(\partial\Om\).
\end{proof}

\begin{remark}
For axes through a point of \(\Om\), Proposition \ref{prop:count} is
Proposition \ref{prop:GP}; the one-tangency-per-side argument of
\cite{GrossiProvenzano} uses that the axis crosses the domain. The proof
above makes no assumption on the position of the axis, which is what
Theorem \ref{thm:level} requires. Note that for a  hyperbolic Killing field \(K\) i.e.\ \(I_K>0\), with axis
\(\gamma\), for every \(p\in\HH^2\),
\begin{equation}\label{eq:invariant-dist}
  \dist(p,\gamma)=\artanh\!\Bigl(\frac{|\omega(p)|}{|K(p)|}\Bigr).
\end{equation}
Hence,  for the axis of Killing field in \eqref{eqn:Killing with 1} of
Section \ref{sec:level}, the distance is \(\operatorname{artanh}(1/|K(p)|)\)
from \(p\) and can lie outside \(\overline\Om\).
\end{remark}

\section{Horoconvexity of the level sets}\label{sec:level}

\begin{proposition}\label{prop:star}
Let \(\Om\subset\HH^2\) be a bounded smooth horoconvex domain and let
\(u=-\log\psi_1\). Then \eqref{eq:star} holds:
\(|(\Hess u)^{-1}J\nabla u|\le1\) on \(\Om\).
\end{proposition}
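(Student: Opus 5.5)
The argument is by contradiction, using the pieces already assembled. By Theorem \ref{thm:main}, \(\Hess u\) is positive definite, hence invertible, at every point of \(\Om\). Fix \(p\in\Om\) and set
\[
  w:=(\Hess u|_p)^{-1}J\nabla u(p)\in T_p\HH^2 .
\]
Suppose \(|w|>1\). By the bijection in Lemma \ref{lem:killing-structure}, there is a unique Killing field \(K\) with
\begin{equation}\label{eqn:Killing with 1}
  K(p)=w,\qquad \omega(p)=1 .
\end{equation}
Its invariant is \(I_K=|w|^2-1>0\). In particular \(K\not\equiv0\), and Lemma \ref{lem:classification} shows that \(K\) is a translation field, whose axis may lie far from \(\Om\).

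Form the shifted derivative \(z=K(\psi_1)+K(u)(p)\,\psi_1\) of Lemma \ref{lem:shift-general}. By construction, \(\Hess u|_p\,K(p)=J\nabla u(p)=\omega(p)\,J\nabla u(p)\). So that lemma gives \((\Delta+\lambda_1)z=0\), \(z(p)=0\) and \(\nabla z(p)=0\), which is a singular zero at \(p\). It also gives \(z|_{\partial\Om}=K(\psi_1)|_{\partial\Om}\).

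Next we control the boundary trace. Hopf's lemma gives \(\nabla\psi_1\ne0\) on \(\partial\Om\), so Proposition \ref{prop:count} applies to the translation field \(K\), which has \(I_K>0\) and an arbitrary axis. Hence \(K(\psi_1)\), and therefore \(z\), has exactly two zeros on \(\partial\Om\). In particular \(z\not\equiv0\), and \(z\) is smooth up to the boundary.

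Finally we reach the contradiction. The domain \(\Om\) is horoconvex, hence geodesically convex and so simply connected. Proposition \ref{prop:nodal} then forbids an interior singular zero of \(z\), which contradicts \(\nabla z(p)=0=z(p)\). Therefore \(|w|\le1\) at every \(p\), which is \eqref{eq:star}.

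The only delicate point, which is really handled already, is the boundary count when the axis misses \(\overline\Om\). Proposition \ref{prop:GP} does not cover this case, and this is exactly why Proposition \ref{prop:count} is needed. One should also check that the case \(\nabla u(p)=0\) causes no issue: there \(w=0\), so the inequality is trivial.
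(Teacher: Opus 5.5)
Your proposal is correct and follows the paper's proof essentially step for step. You construct the Killing field with \(K(p)=(\Hess u)^{-1}J\nabla u(p)\) and \(\omega(p)=1\), and note that \(I_K>0\) forces a translation field. The shifted derivative then has a singular zero at \(p\), and Proposition \ref{prop:count} together with Proposition \ref{prop:nodal} rules that zero out.
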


\begin{proof}
\(\Hess u>0\) by Theorem \ref{thm:main}, so the left side is defined.
Suppose \(m(p):=|(\Hess u)^{-1}J\nabla u|(p)>1\) at some \(p\in\Om\); note
\(\nabla u(p)\neq0\).

By Lemma \ref{lem:killing-structure} there is a unique Killing field \(K\)
with
\begin{equation}
  K(p)=(\Hess u)^{-1}J\nabla u(p),
  \qquad
  \omega(p)=1.  \label{eqn:Killing with 1}
\end{equation}
Its invariant is \(I_K=|K(p)|^2-1=m(p)^2-1>0\), so \(K\) is a translation
field by Lemma \ref{lem:classification}.

Let \(z=K(\psi_1)+K(u)(p)\psi_1\) as in \eqref{eq:shift-general}. By
construction \(\Hess u|_p K(p)=J\nabla u(p)=\omega(p)J\nabla u(p)\), so
Lemma \ref{lem:shift-general} gives \(z(p)=0\) and \(\nabla z(p)=0\).

By Hopf's lemma \(\nabla\psi_1\neq0\) on \(\partial\Om\), so
Proposition \ref{prop:count} applies to \(K\) and \(\psi_1\): the boundary
trace \(z|_{\partial\Om}=K(\psi_1)|_{\partial\Om}\) has exactly two zeros;
in particular \(z\not\equiv0\). A horoconvex domain is geodesically convex,
hence simply connected, so Proposition \ref{prop:nodal} applies and \(z\)
has no singular zero in \(\Om\). This contradicts \(z(p)=0\),
\(\nabla z(p)=0\). Hence \(m\le1\) on \(\Om\).
\end{proof}

\begin{proof}[Proof of Theorem \ref{thm:level}]
By Theorem \ref{thm:main} the function \(u\) is strictly convex, so it has
at most one critical point; since \(\psi_1\) attains its maximum, \(u\) has
exactly one, its minimum \(x_0\). Every value \(t\in(0,\max\psi_1)\) is
regular, so \(\{\psi_1=t\}=\partial\{\psi_1>t\}\) is a smooth compact
curve. The superlevel set \(\{\psi_1>t\}\) is geodesically convex because
\(\log\psi_1\) is concave along geodesics, so its boundary is a single
smooth closed curve.

Fix a point \(x\) on this level curve and write \(G=|\nabla u|(x)>0\). Let
\(N=-\nabla u/G\) be the unit normal pointing into the superlevel set,
parametrize the level curve by arclength \(\alpha\) with unit tangent
\(\tau\), and define the inward geodesic curvature \(\kappa\) by
\(\nabla_\tau\tau=\kappa N\). Differentiating
\(u\circ\alpha=\mathrm{const}\) twice,
\[
  0=\Hess u(\tau,\tau)+\langle\nabla u,\nabla_\tau\tau\rangle
   =\Hess u(\tau,\tau)-\kappa G,
\]
so \(\kappa=\Hess u(\tau,\tau)/G\).

Set \(n=\nabla u/G\) and orient \(\alpha\) so that \(\tau=Jn\); this does
not change \(\kappa\). Write \(a=\Hess u(\tau,\tau)\),
\(b=\Hess u(\tau,n)\), \(c=\Hess u(n,n)\), so \(a,c>0\) and
\(D:=ac-b^2>0\). Since \(J\nabla u=G\tau\),
\[
  (\Hess u)^{-1}J\nabla u=\frac{G}{D}\,(c\,\tau-b\,n),
  \qquad
  \bigl|(\Hess u)^{-1}J\nabla u\bigr|=\frac{G\sqrt{b^2+c^2}}{D}.
\]
Proposition \ref{prop:star} gives \(G\sqrt{b^2+c^2}\le D\). Then
\[
  ac=D+b^2\ge G\sqrt{b^2+c^2}+b^2\ge Gc+b^2,
\]
so
\begin{equation}\label{eq:kappabound}
  \kappa=\frac aG\;\ge\;1+\frac{b^2}{Gc}\;\ge\;1 .
\end{equation}
Every level curve \(\{\psi_1=t\}\), \(t\in(0,\max\psi_1)\), therefore has
geodesic curvature at least \(1\), and the superlevel sets are bounded
smooth domains with \(k_g\ge1\), hence horoconvex by the curvature
reformulation recalled after Definition \ref{def:horoconvex}. For \(t\le0\) the
superlevel set is \(\Om\) itself; for \(t\ge\max\psi_1\) it is empty or the
single point \(x_0\).
\end{proof}

\begin{remark}\label{rem:sharp}
Inequality \eqref{eq:star} is sharp. If \(u=f(b)\) for a Busemann function
\(b\) and a function \(f\) with \(f'>0\), then \eqref{eq:hessb} gives
\(\Hess u=f''\,db\otimes db+f'\,(g-db\otimes db)\), so
\(\Hess u\,(J\nabla b)=f'J\nabla b\) and
\((\Hess u)^{-1}J\nabla u=J\nabla b\) has norm \(1\): these functions
saturate \eqref{eq:star} identically, for every \(f''\). At a point where
equality holds in \eqref{eq:star}, the field constructed in the proof of
Proposition \ref{prop:star} has \(I_K=0\) and is parabolic; its integral
curves are horocycles, for which both comparison arguments of
Section \ref{sec:count} degenerate.
\end{remark}

\begin{remark}\label{rem:balls}
For the radial eigenfunction of a geodesic ball, in the frame
\((\tau,n)\) at distance \(t\) from the center, \(a=\coth(t)\,u'\),
\(c=u''\), \(b=0\), and \(G=u'\), so
\(|(\Hess u)^{-1}J\nabla u|=G/a=\tanh t<1\) for every radius. The
large-ball failure of super log-concavity \cite{WeiXiao} lives in the
normal direction and is invisible to \eqref{eq:star}; this is why no
diameter restriction appears in Theorem \ref{thm:level}.
\end{remark}

\begin{remark}\label{rem:hierarchy}
Between super log-concavity and horoconvexity of the level sets,
\eqref{eq:star} is intermediate. Super log-concavity written for
\(u=-\log\psi_1\) is the tensor inequality
\(\Hess u\ge|\nabla u|\,g\); it bounds every eigenvalue of \(\Hess u\)
below by \(G=|\nabla u|\), so
\[
  \bigl|(\Hess u)^{-1}J\nabla u\bigr|\le\frac{|J\nabla u|}{G}=1 .
\]
Thus super log-concavity implies \eqref{eq:star}, and \eqref{eq:star}
implies \(\kappa\ge1\) by the proof of Theorem \ref{thm:level}. Neither
implication reverses: \eqref{eq:star} imposes no lower bound on
\(\Hess u(n,n)\), the normal bound that fails for large balls in higher
dimension \cite{WeiXiao}, and \(\kappa\ge1\) carries no bound on
\(\Hess u(\tau,n)\), which \eqref{eq:star} controls through
\eqref{eq:kappabound}.
\end{remark}

\section{Research process}
The shifted construction was found after an extended computer-assisted search
that included symbolic calculations, numerical counterexample searches, and
language-model-assisted proof exploration and review. The extension to
general Killing fields and Theorem \ref{thm:level} were found the same way;
the identities of Lemma \ref{lem:killing-structure} and the inequality
\eqref{eq:star} were checked symbolically and numerically before the proofs
were written. None of those calculations or model outputs is used as
evidence in the proofs above.

\section*{Acknowledgements}
We thank Prof. Luigi Provenzano for his interest in this paper and for
suggesting the problem of horoconvexity of the superlevel sets.

\small\raggedright

\end{document}